\documentclass{amsart}
\usepackage{xcolor}
\usepackage{hyperref}
\usepackage[utf8]{inputenc}
\usepackage{amsaddr}
\usepackage[all]{xy}
\usepackage{amssymb}
\usepackage{amsmath}
\usepackage{enumerate}


\newtheorem{theorem}{Theorem}[section]
\newtheorem{conjecture}[theorem]{Conjecture}
\newtheorem{proposition}[theorem]{Proposition}
\newtheorem{definition}{Definition}

\newtheorem{lemma}{Lemma}
\newtheorem{remark}{Remark}

\newtheorem{ithm}{Theorem}[section]

\newtheorem{iconj}[ithm]{Conjecture}

\DeclareMathOperator{\Ricci}{Ric}

\def\cal#1{\mathcal{#1}}

\def\lie#1{\mathfrak{#1}}

\usepackage[all]{xy}

\newcommand{\h}{\frac{1}{2}}

\newcommand{\ga}{\textsl{g}}
\newcommand{\aga}{\textsl{h}}
\makeatletter
\renewcommand{\email}[2][]{%
	\ifx\emails\@empty\relax\else{\g@addto@macro\emails{,\space}}\fi%
	\@ifnotempty{#1}{\g@addto@macro\emails{\textrm{(#1)}\space}}%
	\g@addto@macro\emails{#2}%
}
\makeatother

\newtheorem{claim}{Claim}

\title[Cheeger deformation on fiber bundles]{The concept of Cheeger deformations on fiber bundles with compact structure group}
\author{Leonardo F. Cavenaghi$^{\dagger}$}
\address{Instituto de Matemática, Estatística e Computação Cinetífica -- Unicamp, Rua Sérgio Buarque de Holanda, 651, 13083-859, Campinas, SP, Brazil}
\email[$\dagger$]{leonardofcavenaghi@gmail.com}
\author{Lino Grama$^{\ast}$}
\address{Instituto de Matemática, Estatística e Computação Cinetífica -- Unicamp, Rua Sérgio Buarque de Holanda, 651, 13083-859, Campinas, SP, Brazil}
\email[$\ast$]{lino@ime.unicamp.br}
\author{Llohann D. Sperança$^{\star}$}
\address{Instituto de Ciência e Tecnologia -- Unifesp, Avenida Cesare Mansueto Giulio Lattes, 1201, 12247-014, São José dos Campos, SP, Brazil}
\email[$\star$]{lsperanca@gmail.com}
\begin{document}
	
	\keywords{Cheeger deformantions, Fiber Bundles, Non-negative curvatures, Fat bundles, Positive Ricci, scalar and sectional curvatures}

	\begin{abstract}
The purpose of this paper is two-fold: we systematically introduce the notion of Cheeger deformations on fiber bundles with compact structure groups, and recover in a very simple and unified fashion several results that either already appear in the literature or are known by experts, though are not explicitly written elsewhere. We re-prove: Schwachh\"ofer--Tuschmann Theorem on bi-quotients, many results due to Fukaya and Yamaguchi, as well as, naturally extend the work of Searle--Solórzano--Wilhelm on regularization properties of Cheeger deformations, among others. In this sense, this paper should be understood as a survey intended to demonstrate the power of Cheeger deformations. Even though some of the results here appearing may not be known as stated in the presented form, they were already expected, being our contribution to the standardization and spread of the technique via a unique language.\end{abstract}
	
	\maketitle
	
	\section*{Conflict of interest}
	On behalf of all authors, the corresponding author states that there is no conflict of interest.
	
	\section{Introduction}
	
The metric deformation known as \emph{Cheeger deformations} was firstly introduced on \cite{cheeger}. Its main goal was to produce metrics with non-negative sectional curvature on manifolds with symmetries. Since then, Cheeger deformations were used in \cite{gz,grove2011exotic} to produce new examples of manifolds with non-negative and positive sectional curvature; in \cite{schwachhofer2004metrics,schwachhofer2009homogeneous,kerr2013nonnegatively} to study curvature properties on homogeneous spaces such as biquotients, on
	\cite{searle2015lift} to lift positive Ricci curvature from a metric quotient $M/G$ to $M$, and in \cite{gz,SperancaCavenaghiPublished, CavenaghiSperanca+2022+95+104} to provide examples of manifolds with non-negative sectional and/or positive Ricci curvatures. Other interesting results along the same lines are in \cite{schwachhofer2004metrics,tusch,grove,WB}. Here we introduce an analogous metric deformation, defined on a specific class of metrics on fiber bundles with compact structure groups, naturally supported on Cheeger deformations. Throughout the manuscript we provide other references, including results on the existence metrics of \emph{almost non-negative sectional curvature}.
	
	 Recall that from any fiber bundle $F \hookrightarrow M \rightarrow B$ with compact structure group $G$ can be decoupled a principal $G$-bundle $\cal P \rightarrow B$ and a manifold $F$ with an effective $G$-action. Here, $M$ can be recovered via a submersion $\overline \pi : \cal P \times F \rightarrow M$ with fiber $G$. The idea of our deformation consists of inducing a one parameter family of metrics on $M$ via $\overline \pi$ after making Cheeger deformation on $\cal P$. That is, if $\ga$ is a Riemannian metric on $\cal P$ for which $G$ acts via isometries, given any $G$-invariant metric $\ga_F$ on $F$ we look to the metric $\aga_t$ on $M$ obtained from $\ga_t + \ga_F$ in $\cal P \times F$, see Definition \ref{defn} for further details.
	
	All the long we mostly follow the approach in \cite{mutterz} and \cite{Muter}, introducing useful tensors to standardize the analysis of this deformation, such as nowadays well established the basics on Cheeger deformations. As a very useful formula we shall obtain:
	
	Let $\textsl h$ obtained via $\overline \pi : (\cal P \times F, \ga + \ga_F) \rightarrow (M,\aga)$ and let $\textsl g_t$ be a Cheeger deformation of $\ga$. Then, for every pair $\tilde X = X + X_F + U^*$, $\tilde Y = Y + Y_F + V^*$ of tangent vectors to $M$, appropriately decomposed, it holds that 
		\begin{equation}\label{eq:mutergeneral}
		\tilde \kappa_{t}(\tilde X, \tilde Y) = \kappa_t(X{+}U^{\vee},Y{+}V^{\vee}) {+} K_{\textsl g_F}(X_F - (P_F^{-1}PU)^*, Y_F - (P_F^{-1}PV)^*) + \tilde z_t(\tilde X,\tilde Y),
		\end{equation}
		where $\tilde \kappa_t$ is the unreduced sectional curvature of the metric $\aga_t$ computed in an appropriate reparametrization of the plane $\tilde X\wedge \tilde Y$, $\kappa_t$ is the unreduced sectional curvature of $\ga_t$ and $K_{\ga_F}$ is the unreduced sectional curvature of $\ga_F$. Moreover, $\tilde z_t$ is a non-negative term.
		
		We stress it out that it is not of the author's knowledge whether equation \eqref{eq:mutergeneral} already appears elsewhere in such a general manner. However, when collapsing the fiber $F$ to a single point, it naturally yields to the well known expression of the sectional curvature of a Cheeger deformation computed at some reparameterized planes, see \cite[Proposition 1.3, p.2]{mutterz} or equation \eqref{eq:curvaturaseccional}.

Taking advantage of equation \eqref{eq:mutergeneral}, we re-prove in a very general picture results on almost non-negative sectional curvature appearing in \cite{fukaya-yamaguchi}. Such results as stated were either already known (see \cite{mutterz}) or expected to be true, though not explicitly written elsewhere.
	
	\begin{theorem}[Fukaya--Yamaguchi]\label{thm:mainintro}
Let $F \hookrightarrow M \to B$ be a bundle with compact structure group $G$, fiber $F$ and base $B$. Assume that $M$ is an associate bundle to $\pi : (\cal P,\textsl g) \to B$ such that:
\begin{enumerate}
\item $K_{\textsl g}\geq 0$;
\item $F$ has a $G$-invariant metric $\textsl g_F$ of non-negative sectional curvature.
\end{enumerate}
Then $M$ admits a sequence of Riemannian metrics
$\{\textsl g_n\}$ such that $\mathrm{sec}_{\textsl g_n} \geq -\frac{1}{n},$ $\mathrm{diam}~(M,\textsl g_n) \leq \frac{1}{n}.$
\end{theorem}

A Fukaya--Yamaguchi type result on the existence of almost non-negative Ricci curvature, namely:

\begin{theorem}\label{thm:previagromovintro}
Let $F\hookrightarrow M \rightarrow B$ be a fiber bundle with compact structure group $G$ and total space $M$. Also assume that $F$ carries a metric $\textsl g_F$ of non-negative Ricci curvature and $B$ carries a metric $\textsl g_{\epsilon}$ with $\Ricci(\textsl g_{\epsilon}) \geq -\epsilon^2$. Then $M$ carries a metric $\textsl h_{\epsilon}$ with $\Ricci(\textsl h_{\epsilon}) \geq -\epsilon^2$.
\end{theorem}

Both Theorems \ref{thm:mainintro} and \ref{thm:previagromovintro} should follow from the computations in \cite{schwachhofer2004metrics}, though these follow very directly from our techniques. We also reinforce that Theorem \ref{thm:previagromovintro} was first conjectured to be true in \cite[Conjecture 0.14, p.257]{fukaya-yamaguchi}, see also \cite{guowei,colding, gyun}. Notably as well is the fact that this kind of result is of interest in the field of Metric Geometry, though we do not touch this area here, being the above-mentioned theorem proofs of concept to the deformation here developed. Other very useful references related to these subjects are: \cite{WILKING2000129,https://doi.org/10.48550/arxiv.1105.5955}.

All the analyses coming out from equation \eqref{eq:mutergeneral} allows us to recover in a very simple fashion classical results in bi-quotients, such as:

\begin{theorem}[Schwachh{\"o}fer--Tuschmann]\label{thm:biqnintro}
Any bi-quotient $G//K$ from a compact Lie group $G$ admits a metric with positive Ricci curvature and almost non-negative sectional curvature simultaneously if, and only if, $G//K$ has finite fundamental group.
\end{theorem}

Finally, we obtained two further applications. Recall for instance that in \cite{solorzano} Searle--Solórzano--Wilhelm show that Cheeger deformations work as a strong regularization process: appropriate scaling of the family of metrics on Cheeger deformations imply $C^p$-convergence, for any $p\geq 0$ a priori fixed, to metrics with totally geodesic fibers. We apply this idea here to prove:

	\begin{theorem}\label{thm:analogousSolorzanointro}
	Let $\pi : F\hookrightarrow M \rightarrow B$ be a fiber bundle with compact total space and compact structure group $G$. Assume that $\aga$ is a Riemannian submersion metric on $M$ obtained via the submersion $\overline \pi : \cal (P\times F,\textsl g+\ga_F) \rightarrow M$, where $\cal P$ is the associated principal bundle to $\pi$ and $\textsl g, \ga_F$ are, respectively, $G$-invariant metrics on $\cal P$ and $F$. Then, for any integer $p\geq 0$, after an appropriate re-scaling the fibers of $\pi$, the metric deformation $\aga_t$ (Definition \ref{defn}, section \ref{sec:principal}), converges in the $C^p$-topology to a Riemannian submersion metric with totally geodesic fibers. 
	\end{theorem}
	
	Theorem \ref{thm:analogousSolorzanointro} was already expected to be true but more importantly, this regularization nature of Cheeger deformations already appears in the proof of all the mentioned results in this intro. As it is also clear, the sectional curvature formulae \eqref{eq:mutergeneral} may lead to new conjectures in which concerns the existence of metrics with positive sectional curvature on the total space of some fiber bundles.
	
	Recall, for instance, the fiber dimension Petersen--Wilhelm conjecture:
		
	\begin{iconj}[Petersen--Wilhelm Fiber dimension conjecture]\label{conj:wilhelmintro} If $F\hookrightarrow M\rightarrow B$ is a Riemannian submersion from a positively curved closed manifold $M$, then $$ \dim F<\dim B.$$
	\end{iconj}
	
	In Section \ref{sec:petwill} we make some comments on this conjecture in the case of fiber bundles with the structure group being $S^3, SO(3)$. More precisely, we conjecture:
	
	\begin{iconj}[Principal bundle Strong Petersen--Wilhelm conjecture]\label{conj:principalintro}
Any $S^3, SO(3)$ principal bundle over a positively curved manifold admits a metric with positive sectional curvature if, and only if, such a submersion is fat.
\end{iconj}

Assuming the validity of Conjecture \ref{conj:principalintro} it shall be straightforward to check that: Any $S^2\hookrightarrow M \rightarrow B$ fat bundle with structure group $SO(3)$ admits a metric of non-negative sectional and positive vertizontal curvature. In particular, $\dim B\geq 4$.

	\subsection*{Notation and conventions}
		We denote by $R_{\ga}$ the Riemannian tensor of the metric $\ga$: 
	\[R_{\ga}(X,Y)Z=\nabla_X\nabla_YZ-\nabla_Y\nabla_XZ-\nabla_{[X,Y]}Z,\]
	where $\nabla$ stands for the Levi-Civita connection of $\ga$. We denote either by $K_{\ga}(X,Y)=\ga(R_{\ga}(X,Y)Y,X)$ or by $R_{\ga}(X,Y),$ making it clear in the context, the unreduced sectional curvature of $\ga$. 
	The Ricci tensor of $\ga$ is defined by
	\[\Ricci_{\ga}(X,Y)=\sum_{i=1}^ng(R(e_i,X)Y,e_i), \]
	where $\{e_1,...,e_n\}$ is an orthonormal basis for $\ga$. The associated quadratic form is denoted by $\Ricci_{\ga}(X)=\Ricci_{\ga}(X,X)$.
	
	Whenever we say we have a \emph{Riemannian principal bundle} we mean that the principal bundle is considered with a Riemannian submersion metric.

		\section{(Classical) Cheeger deformations}
	\label{sec:cdef}
	
	We first recall the procedure known as \textit{Cheeger deformations}. Though the main formulae come from classical references, such as \cite{mutterz} and \cite{Muter}, we shall proceed differently in which concerns the presentation. The reason for that is to make more natural our definition of Cheeger deformation on fiber bundles.
	
	Take the product manifold $M\times G$ with the product metric $\ga+t^{-1}Q$, where $G$ acts on $M$ via isometries and $Q$ is a bi-invariant metric on $G$. We therefore see ourselves with two possibilities of free (and commuting) actions:
	
	\begin{equation}\label{eq:modelo}
\begin{xy}\xymatrix{& G\ar@{..}[d]^{\bullet} & \\ G\ar@{..}[r]^{\star} & M\times G\ar[d]^{\pi}\ar[r]^{\pi'} &M\\ &M&}\end{xy}
\end{equation}

In \eqref{eq:modelo} the action $\bullet$ stands to
\begin{equation}\label{eq:bullet}
r\bullet (m,g) := (m,rg),
\end{equation}
while the action $\star$ is nothing but the associated bundle action on $M\times G$, that is
\begin{equation}\label{eq:star}
r\star (m,g) := (rm,rg).
\end{equation}

Therefore, $\pi((m,g)) := m$ meanwhile $\pi'((m,g)) := g^{-1}m$. Since $\pi$ and $\pi'$ define principal bundles, the metric $\textsl g+t^{-1}Q$ induces via $\pi$ and $\pi'$, respectively, the metrics $\ga$ (the original one) and $\textsl g_t$, a \emph{Cheeger deformation} of $\textsl g$. Also note that although $\textsl g_1 = \textsl g$, the horizontal space obtained via $\pi'$ has a different angular position in relation to the horizontal space obtained via $\pi$, what can be directly checked from the fact that
$\textsl g_1(\cdot, \cdot) = \textsl g(C_1\cdot,\cdot) = \textsl g((1+P)^{-1}\cdot,\cdot).$

	Throughout the paper it is shall be denoted by $\mathfrak{m}_p$ the $Q$-orthogonal complement of $\mathfrak{g}_p$, the Lie algebra of $G_p$. We recall that $\mathfrak{m}_p$ is isomorphic to the tangent space to the orbit $Gp$ via \textit{action fields}: for any $U\in \lie g$ the corresponding action field out of $U$ is defined by the rule
	\begin{equation*}
	U^*_p=\frac{d}{dt}\Big|_{t=0}e^{tU}p.
	\end{equation*}
	It is straightforward to check that the map $U\mapsto U^*_p$ is a linear morphism whose kernel is $\lie g_p$. This manner, any vector tangent to $T_pGp$ is said to be \emph{vertical}, hence, such a space is named as the \emph{vertical space} at $p$, being denoted by $\mathcal{V}_p$. For each $p\in M$ its orthogonal complement, denoted by $\cal H_p$, is named \emph{horizontal space}. A tangent vector $\overline X \in T_pM$ can be uniquely decomposed as $\overline X = X + U^{\ast}_p$, where $X$ is horizontal and $U\in \lie m_p$.
	
	To more feasible to understand the geometric properties of Cheeger deformations, next we shall define useful tensors associated with Cheeger deformations, see \cite{mutterz} for further clarifications.
	\begin{enumerate}
		\item The \emph{orbit tensor} at $p$ is the linear map $P : \mathfrak{m}_p \to \mathfrak{m}_p$ defined by
		\[\textsl g(U^{\ast},V^{\ast}) = Q(PU,V),\quad\forall U^{\ast}, V^{\ast} \in \mathcal{V}_p\]
		\item For each $t > 0$ we define $P_t:\lie m_p\to \lie m_p$ as 
		\[\textsl g_t(U^{\ast},V^{\ast}) = Q(P_tU,V), \quad\forall U^{\ast}, V^{\ast} \in \mathcal{V}_p\]
		\item The \emph{metric tensor of $\textsl g_t$}, $C_t:T_pM\to T_pM$ is defined as
		\[\textsl g_t(\overline{X},\overline{Y}) = \textsl g(C_t\overline{X},\overline{Y}), \quad\forall \overline{X}, \overline{Y} \in T_pM\]
	\end{enumerate}
	All the three tensors above are symmetric and positive definite. The next proposition shows how they are related to each other and to the original metric quantities.
	
	\begin{proposition}[Proposition 1.1 in \cite{mutterz}]\label{propauxiliar}The tensors above satisfy:
		\begin{enumerate}
			\item \label{eq:pt} $P_t = (P^{-1} + t1)^{-1} = P(1 + tP)^{-1}$,
			\item If $\overline{X} = X + U^{\ast}$ then $C_t(\overline{X}) = X + ((1 + tP)^{-1}U)^{\ast}$.
		\end{enumerate}
	\end{proposition}
	
 It worth pointing it out that as first observed by Cheeger and playing a vital role in \cite{Muter}, the metric tensor $C_t^{-1}$ can be used to define a very informative reparametrization of $2$-planes to the computation of sectional curvature. Indeed, using this reparametrization we can observe that Cheeger deformations do not create `new' planes with zero sectional curvature, meaning that
	\begin{theorem}\label{thm:curvaturasec}
		Let $\overline{X} = X + U^{\ast},~ \overline{Y} = Y + V^{\ast}$ be tangent vectors. Then $\kappa_t(\overline X,\overline Y) := R_{\textsl g_t}(C_t^{-1}\overline{X},C_t^{-1}\overline{Y},C_t^{-1}\overline{Y},C_t^{-1}\overline{X})$ satisfies
		\begin{equation}\label{eq:curvaturaseccional}
		\kappa_t(\overline X,\overline{Y}) = \kappa_0(\overline{X},\overline{Y}) +\frac{t^3}{4}\|[PU,PV]\|_Q^2 + z_t(\overline{X},\overline{Y}),
		\end{equation}
		where $z_t$ is non-negative.
	\end{theorem}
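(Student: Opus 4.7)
The strategy is to view $g_t$ as the submersion metric induced by the Riemannian submersion
\[
\pi'\co\bigl(M\times G,\,g\times t^{-1}Q\bigr)\longrightarrow (M,g_t)
\]
coming from the free isometric action \eqref{eq:actioncheeger}, and then apply the O'Neill formula. The point of introducing the reparametrization by $C_t^{-1}$ is that it precisely cancels the $(1+tP)^{-1}$ appearing in Proposition \ref{propauxiliar}(2), so that horizontal lifts become linear in $U$ and $V$ and the curvature identity falls out essentially by inspection.

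First, I would compute the horizontal distribution of $\pi'$. Differentiating \eqref{eq:actioncheeger} identifies the vertical vector at $(p,e)$ corresponding to $U\in\lie g$ with $(U^{\ast}_p,-U)$; imposing $(g\times t^{-1}Q)$-orthogonality then shows that horizontal vectors at $(p,e)$ are exactly the pairs $(X+W^{\ast},\,tPW)$ with $X\in \cal H_p$ and $W\in\lie m_p$. Combining this with the identity $d\pi'_{(p,e)}(Y,Z)=Y+Z^{\ast}_p$ and solving $W+tPW=(1+tP)U$ gives $W=U$, so that the horizontal lift of $C_t^{-1}\overline X = X + ((1+tP)U)^{\ast}$ is the clean expression $(\overline X,\,tPU)$, and analogously $(\overline Y,\,tPV)$.

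Second, I would apply O'Neill's formula to these two lifts. The curvature of the product metric splits along the two factors with no mixed terms: the $M$-component evaluated on $(\overline X,\overline Y,\overline Y,\overline X)$ is by definition $\kappa_0(\overline X,\overline Y)$, while the $G$-component evaluated on $(tPU,tPV,tPV,tPU)$ is handled by the biinvariant identity $R_{cQ}(A,B,B,A)=\tfrac{c}{4}\|[A,B]\|^2_Q$, which with $c=1/t$ and the four factors of $t$ from $tPU,tPV$ produces exactly $\tfrac{t^3}{4}\|[PU,PV]\|^2_Q$. The remaining O'Neill $A$-tensor correction,
\[
z_t(\overline X,\overline Y):=\tfrac{3}{4}\bigl\|[(\overline X,tPU),(\overline Y,tPV)]^{v}\bigr\|^2_{g\times t^{-1}Q},
\]
is manifestly non-negative, and summing the three contributions yields \eqref{eq:curvaturaseccional}.

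The only genuinely delicate step is the horizontal-lift calculation at the beginning, where the matching between $C_t^{-1}$, the orbit tensor $P$, and the Cheeger submersion structure must be tracked carefully; after this identification, the remainder is a routine application of O'Neill's formula together with the biinvariant curvature formula on $G$. The $G$-equivariance of the whole construction ensures that it suffices to perform the calculation at $(p,e)$.
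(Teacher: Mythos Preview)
Your proposal is correct and is exactly the standard argument. The paper does not actually supply a proof of this theorem; it merely cites \cite[Proposition~1.3]{mutterz}, and the computation you outline---identifying the $\pi'$-horizontal lift of $C_t^{-1}\overline X$ as $(\overline X,\,tPU)$ and then applying O'Neill's formula together with the biinvariant curvature identity on $(G,t^{-1}Q)$---is precisely the proof given in that reference.
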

	We refer to either \cite[Proposition 1.3]{mutterz} or \cite[Lemma 3.5]{cavenaghiesilva} for the details on the proof and more references. Also, with the aim of concluding this section, next we recall a formula for the Ricci curvature of Cheeger deformed metric (see also \cite[Lemma 2.6, p.7)]{cavenaghiesilva}).
	
	\subsection{Ricci curvature}
	
	Let $\{v_1,\ldots,v_k\}$ be a $Q$-orthonormal basis of eigenvectors of $P : \lie m_p \to \lie m_p$, with eigenvalues $\lambda_1\leq\ldots\leq\lambda_k$. Given a $\ga$-orthonormal basis $\{e_{k+1},..,e_n\}$ for $\cal H_p$, we fix the $\ga$-orthonormal basis
	$\{e_1,\ldots,e_k,e_{k+1},\ldots,e_n\}$ for $T_pM$, where $e_i = \lambda_i^{-1/2}v^{\ast}_i$ for $i\leq k$.
	
	The follow claim can be straightforwardly checked:
	\begin{claim}
		The set $\{C_t^{-1/2}e_i\}_{i=1}^n$ is a $\ga_t$-orthonormal basis for $T_pM$. Moreover, $C_t^{-1/2}e_i = (1+t\lambda_i)^{1/2}e_i$ for $i \le k$ and $C_t^{-1/2}e_i = e_i$ for $i > k.$ 
	\end{claim}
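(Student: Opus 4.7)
The plan is to prove the second sentence first (which describes $C_t^{-1/2}$ on the chosen basis) and then deduce $g_t$-orthonormality as an immediate corollary. The key ingredient is that each $e_i$ is already an eigenvector of $C_t$, so $C_t^{-1/2}$ just acts by a scalar that I can read off.

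First I would treat the horizontal vectors $e_{k+1},\dots,e_n$. By Proposition \ref{propauxiliar}(2), for a purely horizontal vector $X$ (i.e. taking $U=0$ in $\overline X = X+U^\ast$) we have $C_t X = X$. Therefore $C_t e_i = e_i$ for $i>k$, and hence $C_t^{-1/2}e_i = e_i$ on the horizontal part. Next, for $i\le k$, write $e_i = \lambda_i^{-1/2}v_i^\ast$ with $Pv_i = \lambda_i v_i$. Applying Proposition \ref{propauxiliar}(2) with $X=0$ and $U=v_i$,
\[
C_t(v_i^\ast) = \bigl((1+tP)^{-1}v_i\bigr)^{\ast} = (1+t\lambda_i)^{-1}v_i^\ast,
\]
so $C_t e_i = (1+t\lambda_i)^{-1}e_i$. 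Thus each $e_i$ ($i\le k$) is also an eigenvector of $C_t$, with eigenvalue $(1+t\lambda_i)^{-1}$, and the positive symmetric square root yields $C_t^{-1/2}e_i = (1+t\lambda_i)^{1/2}e_i$.

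To finish, I would check $g_t$-orthonormality using self-adjointness. The defining relation $g_t(\overline X,\overline Y) = g(C_t\overline X,\overline Y)$ together with the symmetry of both $g$ and $g_t$ forces $C_t$ to be $g$-self-adjoint, whence so is $C_t^{1/2}$. Therefore
\[
g_t(C_t^{-1/2}e_i, C_t^{-1/2}e_j) = g\bigl(C_t^{1/2}e_i, C_t^{-1/2}e_j\bigr) = g(e_i,e_j) = \delta_{ij},
\]
since $\{e_i\}$ is $g$-orthonormal by construction.

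There is no serious obstacle here; the only subtlety is being explicit that Proposition \ref{propauxiliar}(2) gives $C_t$ as a direct sum on $\cal H_p \oplus \cal V_p$ (identity on the horizontal part, and $(1+tP)^{-1}$ transported by $U\mapsto U^\ast$ on the vertical part), so that the square root can be computed block-by-block and eigenvector-by-eigenvector.
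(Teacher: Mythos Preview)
Your proof is correct; the paper itself states this Claim without proof, so there is no argument to compare against. Your use of Proposition~\ref{propauxiliar}(2) to identify each $e_i$ as an eigenvector of $C_t$, followed by the $g$-self-adjointness of $C_t$ (noted just before Proposition~\ref{propauxiliar}) to deduce $g_t$-orthonormality, is exactly the intended straightforward verification.
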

	
	Define the horizontal Ricci curvature as
	\begin{equation}\label{eq:riccih}
	\Ricci^{\mathbf h}(\overline{X}) := \sum_{i=k+1}^nR(\overline{X},e_i,e_i,\overline X).
	\end{equation}
	
	\begin{lemma}\label{lem:baseapropriada}For $\{e_1,...,e_n\}$ as above,
		\begin{multline}\label{eq:riccicurvature}
		\Ricci_{\textsl g_t}(\overline{X}) = \Ricci_{\textsl g}^{\mathbf h}(C_t\overline{X}) + \sum_{i=1}^nz_t(C_t^{1/2}e_i,C_t\overline{X}) \\+ \sum_{i=1}^k\frac{1}{1+t\lambda_i}\Big(\kappa_0(\lambda_i^{-1/2}v^{\ast}_i,C_t\overline{X}) + \frac{\lambda_it}{4}\|[v_i,tP(1+tP)^{-1}\overline{X}_\lie g]\|_Q^2\Big).
		\end{multline}
		Moreover,
		\begin{equation}\label{eq:limitricci}
		 \Ricci_{\textsl g_t}(\overline{X}) = \Ricci_{\textsl g}^{\mathbf h}(C_t\overline{X}) + \sum_{i=1}^k\frac{1}{4}\|[v_i,U]\|_Q^2  + \lim_{t\to\infty}\sum_{i=1}^nz_t(C_t\overline X,C_t^{1/2}e_i).
		\end{equation}
		In particular, if the action $G$ is free and $\bar{\textsl g}$ denotes the orbital distance metric in $M/G$ it holds that
		\begin{equation}\label{eq:limitfinalricci}
		 \lim_{t\to\infty}\Ricci_{\textsl g_t}(X) = \Ricci_{\overline{\textsl g}}(d\pi X)
		\end{equation}
	\end{lemma}	
	\begin{proof}
		A straightforward computation following equation \eqref{eq:curvaturaseccional} gives
		\begin{multline*}
		\Ricci_{\textsl g_t}(C_t^{-1}\overline{X}) = \sum_{i=1}^{n}R_{\textsl g_t}(C_t^{-1/2}e_i,C_t^{-1}\overline{X},C_t^{-1}\overline{X},C_t^{-1/2}e_i) =\sum_{i=1}^n\kappa_t(C_t^{1/2}e_i,\overline{X})\\
		= \sum_{i=1}^n\kappa_0(C_t^{1/2}e_i,\overline{X}) + \sum_{i=1}^nz_t(C_t^{1/2}e_i,\overline{X}) + \frac{t^3}{4}\sum_{i=1}^k\|[PC_t^{1/2}\lambda_i^{-1/2}v_i,P\overline{X}_\lie g]\|_Q^2
		\\= \Ricci_{\textsl g}^{\mathbf h}(\overline{X}) + \sum_{i=1}^nz_t(C_t^{1/2}e_i,\overline{X}) + \sum_{i=1}^k\frac{1}{1+t\lambda_i}\Big(\kappa_0(\lambda_i^{-1/2}v^{\ast}_i,\overline{X}) + \frac{\lambda_it^3}{4}\|[v_i,P\overline{X}_\lie g]\|_Q^2 \Big).
		\end{multline*}
		Equations \eqref{eq:riccicurvature}, \eqref{eq:limitricci} now follows by replacing $\overline{X}$ by $C_t\overline{X}$. Finally, equation \eqref{eq:limitfinalricci} is derived from Lemma 4.2 in \cite{cavenaghiesilva}.
	\end{proof}
	
	We finish this section giving a characterization to the $z_t$-term. This shall bring more clarity to the content of Lemma \ref{lem:zezao} ahead. See also \cite[Lemma 3.5, p. 11]{cavenaghiesilva}.
	
	\begin{lemma}\label{lem:zezinho}
	It holds that
		\begin{equation}\label{eq:z_t-equation}
		z_t(\overline{X},\overline{Y}) = 3t\max_{\substack{Z \in \mathfrak{g} \\ |Z|_Q = 1}}\dfrac{\{dw_Z(\overline{X},\overline{Y}) + \frac{t}{2}Q([PU,PV],Z)\}^2}{t\ga(Z^{\ast},Z^{\ast}) + 1},
		\end{equation} 
		where $dw_Z$ is defined as: 
		\begin{align}\label{eq:auxiliaryform}
		w_Z : TM &\to \mathbb{R}\\
		\overline X &\mapsto \textstyle\frac{1}{2}\ga(\overline X,Z^*), \label{eq:auxiliary}
		\end{align}
		where $Z^*$ is the action vector associated to $Z\in \lie g$.
		
		Moreover, if $q\in M^{reg}$, $X,Y\in \cal H_q$ and $U\in\lie g$, then
		\begin{align}
		\label{eq:dw1}dw_Z(U^{\ast},X) &= \frac{1}{2}X\ga(U^*,Z^*)\\
		\label{eq:dw2}dw_Z(X,Y)&=-\frac{1}{2}\ga([X,Y]^{\mathcal{V}},Z^*) = -\ga(A_XY,Z^*).
		\end{align}
		Therefore,
	\begin{equation}\label{eq:zzet}
	 z_t(\overline X,\overline Y) = 3t\left|(1+tP)^{-1/2}P\nabla^{\mathbf{v}}_{\overline X}\overline Y - (1+tP)^{-1/2}t\h[PU,PV]\right|_Q^2.
	\end{equation}
	\end{lemma}
	\begin{proof}
	We begin observing that equations \eqref{eq:dw1} and \eqref{eq:dw2} follow from the definition of exterior derivative of $1$-forms:
	\[d\omega(X,Y) := X\omega(Y) - Y\omega(X) - \omega([X,Y]).\]
	
	To continue, let us make a small digression.
		\begin{claim}\label{claim:z_t}
			Let $ pr : (M,\ga) \to (M/G,\overline \ga)$ be a Riemannian submersion and let $X,Y$ be horizontal vector fields. Then
			\begin{equation*}
			|A_{X}Y|_{\ga}^2 = \max_{Z \in \mathfrak{g}, |Z| = 1}\left\{dw_Z(X,Y)^2{\ga}(Z^*,Z^*)^{-1}\right\}.
			\end{equation*}
		\end{claim}
		\begin{proof}
			This follows from the fact that for any vector space with inner product:
			
			If $V$ is a vector space with inner product  $\langle \cdot,\cdot\rangle$, then
			\begin{equation*}
			\langle v,v\rangle = \max_{x \in V-\{0\}}\langle x,v\rangle^2\langle x,x\rangle^{-1}.
			\end{equation*}
			
			Hence,
			\begin{align*}
			|A_{X}Y|_{\ga}^2 &= \max_{Z \in \mathfrak{g}-\{0\}}\left\{\ga(A_XY,Z^*)^2\ga(Z^*,Z^*)^{-1}\right\},\\
			&= \max_{Z\in \lie{g}, |Z| = 1}\left\{dw_Z(X,Y)^2\ga(Z^*,Z^*)^{-1}\right\}.\qedhere
			\end{align*}
		\end{proof}
		Back to the proof, we now apply Claim \ref{claim:z_t} to the Riemannian submersion $\pi': (M\times G,\ga + \frac{1}{t}Q)\to (M,\ga_t)$ since $z_t(\overline X,\overline Y)$ is precisely $3|A^{\pi'}_{\mathcal L_{\pi'}C_t^{-1}\overline X}\mathcal L_{\pi'}C_t^{-1}\overline Y|_{\ga + t^{-1}Q}^2$. 
		
		Denote by $\overline w_Z^t$, $w_Z$ and $\overline w_Z$ the auxiliary $1$-forms (see equation \eqref{eq:auxiliary}) associated to the actions defined in $(M\times G,\ga + \frac{1}{t}Q)$, $(M,\ga)$, and to the action by left translation in $(G,Q)$, respectively. Note that $\overline w_Z^t= w_Z+t^{-1}\overline w_Z$.
		
		On the one hand, $\mathcal{L}_{\pi'}\overline X$, the horizontal lift of $C_t^{-1}\overline X$ with respect to $\pi' :M\times G\to M$ is given by
		\begin{equation*}
		\mathcal{L}_{\pi'}C_t^{-1}\overline X = (\overline X, -tPU).
		\end{equation*}
		On the other hand $d\overline w_Z(PU,PV) = \frac{1}{2}Q([PU,PV],Z)$ and
		\begin{align*}
		d\overline w_Z^t(\mathcal{L}_{\pi'}C_t^{-1}\overline X,\mathcal{L}_{\pi'}C_t^{-1}\overline Y) &= d\overline w_Z^t((\overline X,-tPU),(\overline Y,-tPV)),\\
		&= dw_Z(\overline X,\overline Y) + \frac{1}{t}d\overline w_Z(-tPU,-tPV),\\
		&= dw_Z(\overline X,\overline Y) + \frac{t}{2}Q([PU,PV],Z).
		\end{align*}
		
		Finally, note that
		\begin{equation*}
		(\ga{+}\frac{1}{t}Q)(Z^*,Z^*) = \ga(Z^*,Z^*) + \frac{1}{t}Q(Z,Z) = \ga(Z^*,Z^*) + \frac{1}{t} = \frac{1}{t}\left(t\ga(Z^*,Z^*) + 1\right)
		\end{equation*}
		once $Q(Z,Z) = 1$. The proof is finished by applying Claim \ref{claim:z_t} to the metric $\ga{+} \frac{1}{t}Q$. Equation \eqref{eq:zzet} follows easily combining equations \eqref{eq:z_t-equation} with \eqref{eq:dw1}, \eqref{eq:dw2}.  \qedhere
	\end{proof}

	\section{Cheeger deformations on associated fiber bundles}
	\label{sec:principal}
	
	In this section we shall use Cheeger deformations to produce deformed metrics on fiber bundles with compact structure groups. As we will describe in section \ref{sec:unified}, such a deformation works as a \textit{canonical model} from which any \textit{basic} vertical metric deformation on fiber bundles shall descend from.
	
	Let $F\hookrightarrow M\stackrel{\pi}{\to}B$ be a fiber bundle from a manifold $M$, with fiber $F$ and compact structure group $G$ and base $B$. We start by recalling that the structure group of a fiber bundle is the group where some choice of transition functions on $M$ takes values. Precisely, if $G$ acts effectively on $F$, then $G$ is a structure group for $\pi$ if there is a choice of local trivializations $\{(U_i,\phi_i:\pi^{-1}(U_i) \to U_i\times F)\}$ 
	such that, for every $i,j$ with $U_i\cap U_j \neq \emptyset$, 
	there is a continuous function $\varphi_{ij} : U_i\cap U_j \to G$ satisfying
	\begin{equation}
	\phi_i\circ\phi_j^{-1}(p,f) = (p,\varphi_{ij}(p)f),
	\end{equation}
	for all $p\in U_i\cap U_j$. 
	
	The existence of $\{\varphi_{ij}\}$ allows us to construct a principal $G$-bundle over $B$ (see \cite[Proposition 5.2]{knI} for details) that we shall denote by $\mathcal{P}$. Furthermore, there exists a principal $G$-bundle $\overline\pi : \mathcal P\times F \to M$ whose principal action is given by
	\begin{equation}\label{eq:action}
	r (p,f) := (rp, rf).
	\end{equation} 
	(For the details see the construction on the proof of \cite[Proposition 2.7.1]{gw}.)
	
	For each pair $\textsl g$ and $\textsl g_F$ of $G$-invariant metrics on $\mathcal P$ and $F$, respectively, there exists a metric $h$ on $M$ induced by $\overline\pi$. 
	Denote by $\mathcal{M}$ the set of all metrics obtained in this way (for instance, if the $G$-action on $F$ is transitive, then every metric on $M$ such that the holonomy acts by isometries on each fiber belongs to $\mathcal{M}$). The set $\mathcal{M}$ is the set of admissible metrics for our deformation: 
	
	\begin{definition}[The deformation]\label{defn} Given $h \in \mathcal{M}$, consider $\textsl{g}{+}\textsl{g}_F$ a product metric on $\cal P\times F$ such that $\overline\pi:(\cal P\times F,\textsl g + \textsl g_F)\to (M,\textsl h)$ is a Riemannian submersion. We define $\textsl h_t$ as the submersion metric induced by $\textsl g_t + \textsl g_F$, where $\textsl g_t$ is the time $t$ Cheeger deformation associated with $\ga$.
	\end{definition}
	
	As it can be seen, the deformation itself is well-defined for a broader class of metrics (for instance, $\cal P\times F$ could have any metric such that each slice $\{p\}\times F$ has  a $G$-invariant metric). However, if the metric is not a product metric, the deformed curvature is harder to control and Theorem \ref{thm:secnew} ahead does not hold.

	\subsection{Curvature formulae}
	
	With the aim of establishing the basic curvature formulae associated to the introduced deformation, we proceed with the following discussion. 
	
	Fix $(p,f) \in \mathcal{P}\times F$. 
	Any $\overline{X}\in T_{(p,f)}(\cal P\times F)$ can be written as $\overline{X} = (X+V^{\vee},X_F+W^*)$, where $X$ is orthogonal to the $G$-orbit on $\mathcal P$, $X_F$ is orthogonal to the $G$-orbit on $F$ and, for $V,W\in\lie g$, $V^{\vee}$ and $W^{\ast}$ are the action vectors relative to the $G$-actions on $\cal P$ and $F$ respectively. Let $P,~P_F$ and $P_t$ be the orbit tensors associated to $\textsl g,~\textsl g_F$ and $\textsl g_t$, respectively. 
	We claim that $\overline X$ is $\textsl g_t + \textsl g_F$-orthogonal to the $G$-orbit of \eqref{eq:action} if and only if
	\begin{equation}\label{eq:horbarpi}
	\overline{X} = (X-(P^{-1}_tP_FW)^{\vee},X_F + W^{\ast}).
	\end{equation}
	for some $W\in\lie m_f$. 
	
	Indeed, a vector $(X+{V^{\vee}},X_F+W^{\ast})$ is horizontal if and only if, for every $U\in \lie g$:
	\begin{multline*}
	0 = \left(\textsl g_t{+}\textsl g_F\right)((X+V^{\vee},X_F+W^{\ast}), (U^{\vee},U^{\ast}))= 
	\textsl g(V^{\vee},U^{\vee}) + \textsl g_F(W^{\ast},U^{\ast})  \\=  Q(P_tV + P_FW,U).
	\end{multline*}
	Since $U$ is arbitrary, we conclude that $V = -P^{-1}_tP_F W$. 
	
	Keeping in mind that the point $(p,f)$ is fixed, we abuse notation and denote
	\begin{equation}\label{eq:conventionU*}
	d\bar\pi_{(p,f)}(X,X_F+U^*) := X+X_F+ U^*.
	\end{equation}
	
	Define the tensors $\tilde P_t,\tilde C_t:\mathfrak{m}_f\to \mathfrak{m}_f$,
	\begin{align}
	\tilde P_t & := P_F(1+P_t^{-1}P_F)^{-1}=(P_F^{-1}+P_t^{-1})^{-1},\\
	\tilde C_t & := -C_tP_t^{-1}\tilde P_t=-P^{-1}\tilde P_t.
	\end{align}	
	
	\begin{claim}\label{claim:lift}
		Let $\cal L_{\overline\pi}: T_{\overline\pi(p,f)}M \to T_{(p,f)}(\mathcal{P}\times F)$ be the horizontal lift associated to $\overline\pi$. Then,
		\begin{equation}\label{eq:lift}
		\cal L_{\overline\pi}(X+X_F+U^*)=(X-(P_t^{-1}\tilde P_tU)^\vee,X_F+(P_F^{-1}\tilde P_tU)^*).	
		\end{equation}
	\end{claim}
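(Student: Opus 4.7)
The plan is to combine the two ingredients from the preceding paragraphs: the explicit form \eqref{eq:horbarpi} of horizontal vectors for the $G$-action \eqref{eq:action} and the convention \eqref{eq:conventionU*} for computing $d\bar\pi$. Once both are in place, determining $\cal L_{\bar\pi}(X+X_F+U^*)$ reduces to solving a single linear equation for a parameter $W\in\lie m_f$.

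Concretely, I would parametrize the unknown horizontal lift using \eqref{eq:horbarpi} as
$$\overline X_H = (X - (P_t^{-1}P_F W)^\vee,\; X_F + W^*),\qquad W\in\lie m_f.$$
To evaluate $d\bar\pi(\overline X_H)$, I would add the vertical vector $((P_t^{-1}P_F W)^\vee,(P_t^{-1}P_F W)^*)$, which is tangent to the diagonal $G$-orbit and hence lies in $\ker d\bar\pi_{(p,f)}$. This puts $\overline X_H$ into the form $(X,\; X_F + ((1+P_t^{-1}P_F)W)^*)$ required by \eqref{eq:conventionU*}, yielding
$$d\bar\pi_{(p,f)}(\overline X_H) \;=\; X + X_F + \bigl((1+P_t^{-1}P_F)W\bigr)^*.$$

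Matching the right-hand side with $X+X_F+U^*$ reduces the claim to the equation $(1+P_t^{-1}P_F)W = U$. Using the defining identity $\tilde P_t = P_F(1+P_t^{-1}P_F)^{-1}$ recorded just above the claim, this is solved by $W = P_F^{-1}\tilde P_tU$. Substituting back, the $\cal P$-slot of $\overline X_H$ becomes $X - (P_t^{-1}P_F\cdot P_F^{-1}\tilde P_tU)^\vee = X - (P_t^{-1}\tilde P_tU)^\vee$, which is precisely \eqref{eq:lift}.

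The argument is essentially bookkeeping, so there is no substantive obstacle. The only genuine point to verify is that $1+P_t^{-1}P_F$ is invertible on $\lie m_f$; this is immediate because $P_t^{-1}P_F$ is similar (via $P_t^{1/2}$) to the $Q$-symmetric positive-definite operator $P_t^{-1/2}P_FP_t^{-1/2}$, so its eigenvalues are positive and $1+P_t^{-1}P_F$ is in fact bounded below by $1$.
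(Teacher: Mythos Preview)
Your proof is correct and follows essentially the same approach as the paper: both identify the candidate lift via the parametrization \eqref{eq:horbarpi} with $W=P_F^{-1}\tilde P_tU$ and then check that it projects to $X+X_F+U^*$ using the kernel of $d\bar\pi$ together with the identity $\tilde P_t=(P_t^{-1}+P_F^{-1})^{-1}$. The only cosmetic difference is that the paper verifies the given formula directly, whereas you derive $W$ by solving $(1+P_t^{-1}P_F)W=U$; your extra remark on the invertibility of $1+P_t^{-1}P_F$ is a point the paper leaves implicit.
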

	\begin{proof} First observe that the right-hand-side of \eqref{eq:lift} is of the form \eqref{eq:horbarpi}: take $W=P_F^{-1} \tilde P_tU $, so $ P_t^{-1}\tilde P_tU=P^{-1}_tP_FW$. Therefore, it is sufficient to verify that 
		\[d\bar\pi (X-(P_t^{-1}\tilde P_tU)^\vee,X_F+(P_F^{-1}\tilde P_tU)^*)=X+X_F+U^*.\]
	Since $\ker d\bar\pi=\{(U^\vee,U^*)~|~U\in\lie g\}$, convention \eqref{eq:conventionU*} gives $d\bar\pi(U^\vee,0)=-U^*$, thus
	\begin{equation*}
	d\bar\pi (X-(P_t^{-1}\tilde P_tU)^\vee,X_F+(P_F^{-1}\tilde P_tU)^*)=X+X_F+((P_t^{-1}+P_F^{-1})\tilde P_tU)^*=X+X_F+U^*
	\end{equation*}
	since $\tilde P_t=(P_t^{-1}+P_F^{-1})^{-1}$.
	\end{proof}
	Next, we prove that the unreduced sectional curvature is nondecreasing for reparameterized planes (Theorem \ref{thm:secnew}). For the reparametrization, extend  $\tilde C_t$ to $T_{\overline\pi(p,f)}M$ via
	\begin{equation}
	\tilde C_t(X+X_F + U^{\ast}) := X + X_F + (\tilde C_tU)^{\ast}.
	\end{equation} 

 Mimicking M. M\"uter's approach, we obtain a similar result to Theorem \ref{thm:curvaturasec} defining $\tilde \kappa_{t}(\tilde X, \tilde Y) = R_{\textsl h_t}(\tilde C_t^{-1}X,\tilde C_t^{-1}Y,\tilde C_t^{-1}Y,\tilde C_t^{-1}X)$.
	\begin{theorem}[Sectional curvature]\label{thm:secnew}
		Let $\textsl h\in\cal M$ and $\textsl g_t + \textsl g_F$ be as in Definition \ref{defn}. Then, for every pair $\tilde X = X + X_F + U^*$, $\tilde Y = Y + Y_F + V^*$ , 
		\begin{equation*}\label{eq:novaseccional}
		\tilde \kappa_{t}(\tilde X, \tilde Y) = \kappa_t(X{+}U^{\vee},Y{+}V^{\vee}) {+} K_{\textsl g_F}(X_F - (P_F^{-1}PU)^*, Y_F - (P_F^{-1}PV)^*) + \tilde z_t(\tilde X,\tilde Y),
		\end{equation*}
		where $\kappa_t$ is as in Theorem \ref{thm:curvaturasec} and $\tilde z_t$ is non-negative.
	\end{theorem}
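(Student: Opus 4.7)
The plan is to compute $\tilde\kappa_t(\tilde X,\tilde Y)$ by applying O'Neill's curvature formula to the Riemannian submersion $\bar\pi:(\mathcal P\times F,g_t\times g_F)\to(M,h_t)$ evaluated on the reparametrized vectors $\tilde C_t^{-1}\tilde X,\tilde C_t^{-1}\tilde Y$, and then exploiting the product structure of the total-space metric to split the resulting curvature into a $g_t$-piece and a $g_F$-piece.

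First I would compute the horizontal lifts. Since $\tilde C_t$ acts only on the vertical component, formula \eqref{eq:lift} applied to $\tilde C_t^{-1}\tilde X=X+X_F+(\tilde C_t^{-1}U)^*$ gives
\[\mathcal L_{\bar\pi}(\tilde C_t^{-1}\tilde X)=\bigl(X-(P_t^{-1}\tilde P_t\tilde C_t^{-1}U)^\vee,\;X_F+(P_F^{-1}\tilde P_t\tilde C_t^{-1}U)^*\bigr),\]
and analogously for $\tilde Y$. Using $\tilde C_t=-P^{-1}\tilde P_t$ (whence $\tilde P_t\tilde C_t^{-1}=-P$) together with $P_t^{-1}=(1+tP)P^{-1}$ from Proposition \ref{propauxiliar}, one checks the simplifications $-P_t^{-1}\tilde P_t\tilde C_t^{-1}=1+tP$ and $P_F^{-1}\tilde P_t\tilde C_t^{-1}=-P_F^{-1}P$. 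Consequently, the $\mathcal P$-component of the lift reduces to $X+((1+tP)U)^\vee=C_t^{-1}(X+U^\vee)$, and the $F$-component reduces to $X_F-(P_F^{-1}PU)^*$.

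Next I would invoke O'Neill's equation for $\bar\pi$: it expresses $\tilde\kappa_t(\tilde X,\tilde Y)$ as the $(g_t\times g_F)$-curvature of the horizontally lifted pair plus $\tfrac34$ of the squared vertical bracket (which is non-negative). Because $g_t\times g_F$ is a product metric, the first summand decouples as the sum of the $g_t$-curvature on the $\mathcal P$-components and the $g_F$-curvature on the $F$-components. By the identifications above, the $\mathcal P$-contribution is exactly $\kappa_t(X+U^\vee,Y+V^\vee)$ from Theorem \ref{thm:curvaturasec}, and the $F$-contribution is exactly $K_{g_F}(X_F-(P_F^{-1}PU)^*,Y_F-(P_F^{-1}PV)^*)$. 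Declaring $\tilde z_t(\tilde X,\tilde Y)$ to be the remaining $\tfrac34$-bracket term finishes the argument.

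The principal obstacle I anticipate is the algebraic bookkeeping with the tensors $P,P_F,P_t,\tilde P_t,C_t,\tilde C_t$: they act on different (but related) subspaces of $\mathfrak g$, and one has to verify carefully that the simplifications above are valid on $\mathfrak m_f$ and that the action-field convention \eqref{eq:conventionU*} is respected when projecting back to $T_{\bar\pi(p,f)}M$.
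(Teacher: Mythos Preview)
Your proposal is correct and follows essentially the same route as the paper: compute $\mathcal L_{\bar\pi}(\tilde C_t^{-1}\tilde X)=(C_t^{-1}(X+U^\vee),\,X_F-(P_F^{-1}PU)^*)$ via Claim~\ref{claim:lift} and the tensor identities, then apply the Gray--O'Neill formula and split by the product structure, with $\tilde z_t$ equal to the non-negative $A$-tensor term. Your write-up in fact spells out the intermediate simplifications $\tilde P_t\tilde C_t^{-1}=-P$ and $P_t^{-1}P=1+tP$ more explicitly than the paper does.
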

	\begin{proof}
		The proof follows from a direct use of  Gray--O'Neill curvature formula and Claim \ref{claim:lift}. Observe that
		\begin{equation*}
		\mathcal{L}_{\overline\pi}(\tilde C_t^{-1}\tilde X) = (C_t^{-1}(X+U^{\vee}),X_F - (P_F^{-1}PU)^{\ast}).
		\end{equation*}
		Let $\tilde z_t$ be three times the norm squared of the integrability tensor of $\overline\pi$ applied to $\mathcal{L}_{\overline\pi}\tilde C_t^{-1}\tilde X,\mathcal{L}_{\overline\pi}\tilde C_t^{-1}\tilde Y$ (see \cite{gw} for details). Then,
		\begin{multline*}
		R_{\textsl h_t}(\tilde C_t^{-1}X,\tilde C_t^{-1}Y,\tilde C_t^{-1}Y,\tilde C_t^{-1}X)=K_{\textsl g_t}(C_t^{-1}(X+U^{\vee}),C_t^{-1}(Y+V^{\vee}))\\+K_{\textsl g_F}(X_F - (P_F^{-1}PU)^{\ast},Y_F - (P_F^{-1}PV)^{\ast})+\tilde z_t(\tilde X,\tilde Y)\\=\kappa_t(X{+}U^{\vee},Y{+}V^{\vee}) {+} K_{\textsl g_F}(X_F - (P_F^{-1}PU)^*, Y_F - (P_F^{-1}PV)^*) + \tilde z_t(\tilde X,\tilde Y).
		\end{multline*}
	\end{proof}
	Since the $\tilde z_t$ shall play some role in the next applications in this paper, we shall study it in more detail. We first claim that it isnon-decreasing with respect to $t$. This is a crucial observation since $\tilde z_0$ is an essential part of the initial curvature: since $\bar\pi:(\cal P\times F,g\times g_F)\to(M,h )$ is chosen to be a Riemannian submersion, $\tilde z_0$ is  the $A$-tensor term on the submersion formula. Or, equivalently, taking $t=0$ in Theorem \ref{thm:secnew},
	\begin{multline*}
	K_{\textsl h}(\tilde X,\tilde Y)=\tilde \kappa_{0}(\tilde X, \tilde Y) \\= \kappa_0(X{+}U^{\vee},Y{+}V^{\vee}) {+} K_{\textsl g_F}(X_F - (P_F^{-1}PU)^*, Y_F - (P_F^{-1}PV)^*) + \tilde z_0(\tilde X,\tilde Y)\\
	= K_{\textsl g}(X{+}U^{\vee},Y{+}V^{\vee}) {+} K_{\textsl g_F}(X_F - (P_F^{-1}PU)^*, Y_F - (P_F^{-1}PV)^*) + \tilde z_0(\tilde X,\tilde Y).
	\end{multline*}
	
It is even possible to furnish a precise description to $\widetilde z_t$. Indeed, although we choose to omit the proof, it can be proved exactly as in \cite[Lemma 3.5, p. 11]{cavenaghiesilva}, or Lemma \ref{lem:zezinho}, that
	\begin{lemma}\label{lem:zezao} Let 
	\begin{align*}
	 w^t_Z &: T\cal P \to \mathbb{R}
	\\
		&X+U^{\vee} \mapsto \textstyle\frac{1}{2}\ga_t(X+ U^{\vee},Z^{\vee}).
		\end{align*}
			\begin{align*}
	 w_Z &: TF \to \mathbb{R}
	\\
		& X_F + U^{\ast} \mapsto \textstyle\frac{1}{2}\ga_F(X_F + U^{\ast},Z^*),
		\end{align*}
		where $Z^*$ is the action vector associated to $Z\in \lie g$. Then it holds that
	\begin{multline*}
	3^{-1}\tilde z_t(\tilde X, \tilde Y) = \\\max_{Z\in \lie g\setminus \{0\}}\dfrac{\left\{dw_Z^t(X+C_t^{-1}U^{\vee},Y+C_t^{-1}V^{\vee})+dw_Z(X_F-(P_F^{-1}PU)^*,Y_F-(P_F^{-1}PV)^*)\right\}^2}{\textsl g_t(Z^{\vee},Z^{\vee})+\textsl g_F(Z^*,Z^*)}.
	\end{multline*}
	\end{lemma}

		\subsection{Regularization via Cheeger deformations}
	
	Let $(M,\textsl g)$ be a compact Riemannian manifold with an effective isometric action by a compact Lie group $G$. If $\ga_t$ denotes a one-parameter family of metrics obtained out from $\ga$ via Cheeger deformations, we can check from the expression to $P_t = P(1+tP)^{-1}$, see equation \eqref{eq:pt}, that as $t\to \infty$ the Riemannian metric $\textsl g_t$ degenerates. 
	
	Taking this observation in account, in \cite{solorzano} Searle, Solórzano and Wilhelm show that for any compact subset of the regular stratum of the $G$-action on $M$, re-scaling the fibers of the fiber bundle $\pi : (M^{reg},\ga_t) \rightarrow M^{reg}/G$, with the same parameter $t$, a procedure known as \textit{Canonical Variation}, implies that for any integer $p\geq 0$ it holds the convergence of ${\widetilde g}_t: = t{\ga_t}|_{\cal V}\oplus \ga|_{\cal H}$, in the $C^p$-topology, to a Riemannian metric with totally geodesic fibers, see \cite[Theorem A]{solorzano}. 
	
	It follows, in particular, that the class $\mathbf{P}^{\Omega}$ of principal bundles with \textit{connection metrics} is invariant by Cheeger deformations. More interesting, any principal bundle with an invariant submersion metric is attracted to $\mathbf{P}^{\Omega}$ by the means of Cheeger deformations.
	
	In this section, we shall prove the analogous result to the class of Riemannian submersions on fiber bundles with compact structure group and total space.
	
	Let $(M,\textsl h_t)\rightarrow (B,\textsl g_B)$ be a complete Riemannian fiber bundle where $\textsl h_t$ is induced via Definition \ref{defn}. We prove that for any compact $K\subset M$ and any non-negative integer $p$, the canonical deformation
	\begin{equation}\label{eq:limitmetric}
	  \widetilde {\textsl h_t}(x) := t\textsl h_t(x)|_{\cal V} + \textsl h_t(x)|_{\cal H},~x\in K
	\end{equation}
	where $(\cal H,\textsl h_t|_{\cal H})$ is isometric to $(TB,\textsl g_B)$, converges to, in the $C^p$-topology, to a Riemannian submersion metric with totally geodesic fibers on $M\rightarrow B$ and horizontal distribution isometric to $\cal H\cong TB$. 
	
	To begin with, let us explain a little bit further about the admissible metrics to the deformation since we have the restriction imposed by the class $\mathcal M$ (recall Definition \ref{defn}).
	
	Suppose that we start with a Riemannian metric $\textsl g_B$ on $B$. Then let $\cal P$ be the principal bundle obtained from $M\rightarrow B$ and let $\omega : TP \rightarrow \lie g$ be any connection there defined. If the structure group of $M\rightarrow B$ has a biinvariant metric $Q$, we impose the \textit{Kaluza--Klein (or connection) metric} on $\cal P$:
	\begin{equation}\label{eq:kaluzaklein}
	 \textsl g := \textsl g_B + Q(\omega,\omega).
	\end{equation}

	Assuming that $G$ is compact one observes that it is always possible to assume that $\omega$ is $G$-invariant. Therefore, any metric $\textsl h$ obtained this way belongs to $\cal M$ for the invariant metrics on $\cal P$ defined by equation \eqref{eq:kaluzaklein}. Theorem \ref{thm:analogousSolorzano} shall show that every metric $h$ in the class $\cal M$ is attracted by the subset of $\cal M$ of metrics obtained in this manner.
	
	Let us first prove that the metric \eqref{eq:limitmetric} shall approach a metric with totally geodesic fibers as $t$ grows large.
	To do so, we first observe that the shape operator $\widetilde S_X$ for any $X\in \cal H$ associated to $\widetilde {\textsl h_t}$ coincides with the shape operator $S^t_X$ of $\textsl h_t$. Indeed, according to equation $(2.1.7)$ in \cite[Chapter 2, p. 47]{gw} the \textit{vertical component} of the covariant derivative ${\widetilde \nabla}_TX$ of a canonical variation remains unchanged provided if $T$ is vertical and $X$ is horizontal. 
	
	 It then suffices to check that $\lim_{t\to\infty}\max_{|X| = 1}|S^t_X|_t = 0$ in $K$. To do so, observe that since the shape operator is symmetric with respect to the metric it is obtained from, it holds that 
	\[\max_{|X|_{\aga}=1}|S^t_X|_{\textsl h_t} = \max_{|X|_{\aga}= |V|_{\aga}=1}|\textsl{h}(-\widetilde C_t{{\nabla_t}^{\mathbf{v}}_VX},V)|.\] Finally, it can be directly checked from the Koszul formula that the right hand side on the above equation goes to $0$ as $t\to \infty$: since
	\begin{align*}
	 \textsl{h}(-\widetilde C_t{{\nabla_t}^{\mathbf{v}}_VX},V) &= X\textsl h(\widetilde C_tV,V) + \textsl h([X,V],\widetilde C_tV) + \textsl h([X,V],\widetilde C_tV)
	\end{align*}
	and $\widetilde C_t = -P^{-1}(P_F^{-1} + P_t^{-1})^{-1}$, $P_t^{-1} = P^{-1}(1+tP)$ it is clear that the only possible problematic term is $ X\textsl h(\widetilde C_tV,V)$. However, applying the Leibiniz rule, one gets that
	\begin{equation*}
	 X\textsl h(\widetilde C_tV,V) = \textsl h(\nabla_X\widetilde C_tV,V) + \textsl h(\widetilde C_tV,\nabla_XV).
	\end{equation*}
	Once more, the Leibiniz rule and the notion of covariant derivative to tensors implies that it suffices to study $\nabla_X\widetilde C_t$. But since
	\begin{align*}
	 \nabla_X\widetilde C_t &= -(P^{-2}\nabla_XP)(P_F^{-1} + P_t^{-1})^{-1} + P^{-1}\nabla_X(P_F^{-1} + P_t^{-1})^{-1},\\
	 \nabla_X(P_F^{-1} + P_t^{-1})^{-1} &= -(P_F^{-1} + P_t^{-1})^{-2}(\nabla_XP_F^{-1} -P_t^{-2}\nabla_XP_t),\\
	 \nabla_XP_t &= \nabla_X(1+tP) = t\nabla_XP.
	\end{align*}
	we are done. $\square$
	
	We prove that:
	
	\begin{theorem}\label{thm:analogousSolorzano}
	Let $\pi : F\hookrightarrow M \rightarrow B$ be a fiber bundle with compact total space and compact structure group $G$. Assume that $\aga$ is a Riemannian submersion metric on $M$ obtained via the submersion $\overline \pi : \cal (P\times F,\textsl g+\ga_F) \rightarrow M$, where $\cal P$ is the associated principal bundle to $\pi$ and $\textsl g, \ga_F$ are, respectively, $G$-invariant metrics on  $\cal P$ and $F$. Then for any integer $p\geq 0$, after an appropriate re-scaling the fibers of $\pi$, the metric deformation $\aga_t$ (Definition \ref{defn}, section \ref{sec:principal}), converges in the $C^p$-topology to a Riemannian submersion metric with totally geodesic fibers. 
	\end{theorem}
	\begin{proof}
	Although we could proceed similarly but independently to Searle--Solórzano--Wilhelm's result (\cite[Theorem A]{solorzano}), we shall use it to both metrics $\textsl g$ in $\cal P$ and $\ga_F$ in $F$. 
	
	Once more, Theorem A in \cite{solorzano} states that if the metric $\widetilde \ga_t$ is a same parameter canonical variation of the Cheeger deformation of $\textsl g$, it follows that for any integer $p\geq 0$ the metric $\widetilde \ga_t$ converges, in the $C^p$ topology, to a metric $\textsl g_{\infty}$ to which the fibers of $\pi$ are totally geodesic. It then suffices to check that a canonical variation of $\textsl g_t$ and of the $t$-Cheeger deformation of $\ga_F$ induces via $\overline \pi : (\cal P + F,\textsl g_t + \textsl g_F) \rightarrow (M,\textsl h_t)$ the metric $\widetilde h_t$.
	
	To do so, observe that since according to equation \eqref{eq:lift} 
	\[\cal L_{\overline\pi}(X+X_F+U^*)=(X-(P_t^{-1}\tilde P_tU)^\vee,X_F+(P_F^{-1}\tilde P_tU)^*)\]
	one has
	\begin{align*}
	 \textsl h_t\left(d\overline \pi\cal L_{\overline\pi}(X+X_F+U^*),\cdot \right) &= \textsl g_t + \textsl g_F\left((X-(P_t^{-1}\tilde P_tU)^\vee,X_F+(P_F^{-1}\tilde P_tU)^*),\cdot\right)\\
	 &= \textsl g_t\left(X-(P_t^{-1}\tilde P_tU)^\vee,\cdot\right) + \textsl g_F\left(X_F+(P_F^{-1}\tilde P_tU)^*,\cdot\right)\\
	 &= \textsl g(X,\cdot) + \textsl g(-C_t(P_t^{-1}\tilde P_tU)^{\vee},\cdot) + \textsl g_F\left(X_F+(P_F^{-1}\tilde P_tU)^*,\cdot\right).
	\end{align*}
	
	The tensor $P_t$ is changed to $tP(1+tP)^{-1}$ under a canonical variation so the following changes hold
	\begin{align*}
	 C_t &= P^{-1}P_t \leftrightarrow t(1+tP)^{-1}\\
	 \tilde P_t &= (P_F^{-1} + P_t^{-1})^{-1} \leftrightarrow (P_F^{-1} + t^{-1}P_t^{-1})^{-1}.
	\end{align*}
	Moreover, the $t$-canonical variation of a $t$-Cheeger deformation of $\textsl g_F$ yields the change
	\begin{align*}
	 P_F \leftrightarrow tP_F(1+tP_F)^{-1}
	\end{align*}

	Hence, making $t\to\infty$ yields
	\begin{multline*}
	 \ga(-(P^{-1}(t^{-1}P_F^{-1}(1+tP_F)+t^{-1}P_t^{-1})^{-1}U)^{\vee},\cdot) \\+ \textsl g_F((t^{-1}P_F^{-1}(1+tP_F)(t^{-1}(1+tP_F)P_F^{-1}+t^{-1}P_t^{-1})^{-1}U)^*,\cdot) \longrightarrow\\ \h\left\{\ga_{F}(U^*,\cdot)-\ga ((P^{-1}U)^{\vee},\cdot)\right\}
	\end{multline*}
	
	On the other hand, since $t\tilde C_t \rightarrow P^{-1}$ as $t\to \infty$ and $d\overline \pi(U^{\vee},0) = -U^{\ast}$ (recall Claim \ref{claim:lift}), the previous computation guarantees the result.
	\end{proof}
	
		\section{Revisiting some classical results: curvature of bi-quotients and almost non-negative curvatures}
	\label{sec:unified}

\subsection{Bi-quotients}
	\label{sec:unifiedbi}
	
	Following \cite[Chapter 2.6]{gw}, let $G$ be a Lie group with a left-invariant metric that is right-invariant under a subgroup $H$. Inspired by constructions of homogeneous spaces, consider the group manifold $G\times H$ which acts isometrically on $G$ via
	\begin{equation}
	 (g,h)\cdot a := gah^{-1}.
	\end{equation}
	
	Since any subgroup $K\leq G\times H$ acts via the same manner on $G$, if this $K$-action happens to be free then the the orbit space $G//K$ is called a \textit{bi-quotient} of $G$. Moreover, once the $K$ action on $G$ is via isometries, there is a metric on $G//K$ that makes the quotient projection $\pi : G\rightarrow G//K$ to be a Riemannian submersion. 
	
	Observe that we can see $G//K$ as the total space of a fiber bundle with trivial fiber, in the sense that the fiber $F$ is a just point and the bundle projection is the identity map. This manner, we can derive the curvature formula to certain deformed metrics on bi-quotients from the previous section.
	
	Indeed, note that if we consider $G\times K$ with the $K$-action defined by \eqref{eq:star} then the projection $\pi'$ recovers $G$ as the respective orbit space. Therefore, composing $\pi'$ with the projection $\pi$ we obtain the corresponding projection $\overline \pi$ (recall Definition \ref{defn})
	\begin{equation}
	 \xymatrix{
	  K\ar@{..}[rr] && G\times K \times \{e\}\ar@{->}[rr]^{\pi'} \ar@/_/@{.>}[rrrr]_{\overline \pi} && \ar@{->}[rr]^{\pi} G \ar@{->}[rr] &&  G//K
	 }
	\end{equation}

The purpose of the computations presented here is the one of showing how the curvature formula on bi-quotients can be very simplified considering the deformation we have introduced in section \ref{sec:principal}.

Indeed, a simple use of Theorem \ref{thm:secnew} implies that the sectional curvature of $M := G//H$ can be read from $\overline\pi$ via the curvature of $\cal P = G$ since $F = \{e\},$ the identity in $K$. Namely,
\begin{theorem}
Let $Q$ be a bi-invariant metric on $K = G\times H$ and $\textsl g$ be a left invariant metric on $G$ which is right-invariant by the elements of the form $\{e\}\times H$. If $\textsl g_t$ is the metric on $G//K$ induced by $\overline \pi : (G\times K,\textsl g + t^{-1}Q) \rightarrow (G//K,\textsl h_t)$, the sectional curvature of $\textsl h_t$ satisfies
\begin{equation}
\kappa_{\textsl h_t}(\tilde X, \tilde Y) = \kappa_{\textsl g_t}(X{+}U^{\vee},Y{+}V^{\vee}) + \widetilde z_t(\tilde X,\tilde Y)
\end{equation}
where $\tilde X = X + X_F + U^*$, $\tilde Y = Y + Y_F + V^*$ and $\tilde z_t$ is computed in Lemma \ref{lem:zezao}.
\end{theorem}

We proceed developing the formulae to the Ricci curvature of the metric deformation given in Definition \ref{defn}, presented in section \ref{sec:principal}. Further applications on bi-quotients shall be also obtained next.

		\subsection{Almost non-negative curvatures and positive Ricci curvature}

	Since we are relying on the horizontal lift $\cal L_{\overline\pi}$ defined by \eqref{eq:lift} to compute curvatures, in order to study the Ricci curvature of $\textsl h_t$ we begin by constructing an appropriate basis for the horizontal space of $\overline\pi$ with respect to $\textsl g_t + \textsl g_F.$	

	Consider a $Q$-orthonormal basis $\{v_k(0)\}$ of $\mathfrak{m}_f$ and define
	\begin{equation*}\label{eq:basemovel}
	v_k(t) = \tilde P_t^{-1/2}v_k(0).
	\end{equation*}
	\begin{lemma} \label{lem:horbasis}
		The set
		\begin{equation}\label{eq:123}
		\{(-{P_t^{-1}\tilde P_tv_k(t)}^{\vee},{P_F^{-1}\tilde P_tv_k(t)}^*)\} = \{(-P_t^{-1}\tilde P_t^{1/2}v_k(0)^\vee, P_F^{-1}\tilde P_t^{1/2}v_k(0)^*)\}
		\end{equation}
		is $\textsl g_t + \textsl g_F$-orthonormal and $\textsl g_t + \textsl g_F$ orthogonal to $(U^\vee,U^*)$, for every  $U\in \lie g$.
	\end{lemma}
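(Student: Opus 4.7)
The plan is to verify the two assertions in Lemma \ref{lem:horbasis} — horizontality and orthonormality — separately, relying only on Proposition \ref{propauxiliar}, the horizontality characterization \eqref{eq:horbarpi}, and the identity $\tilde P_t^{-1}=P_t^{-1}+P_F^{-1}$ built into the definition of $\tilde P_t$. Throughout I would exploit the fact that $P_t$, $P_F$, and $\tilde P_t$ are all $Q$-symmetric, which lets me move them freely between the slots of $Q$.

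For horizontality, I would simply match the proposed vector $(-(P_t^{-1}\tilde P_tv_k(t))^\vee,(P_F^{-1}\tilde P_tv_k(t))^*)$ to the template \eqref{eq:horbarpi}: taking $X=0$, $X_F=0$, and $W=P_F^{-1}\tilde P_tv_k(t)\in\lie m_f$, one checks $P_t^{-1}P_FW=P_t^{-1}\tilde P_tv_k(t)$, so the vector is of the horizontal form and is therefore $g_t\times g_F$-orthogonal to every $(U^\vee,U^*)$ with $U\in\lie g$.

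For orthonormality I would abbreviate $a_k:=P_t^{-1}\tilde P_tv_k(t)$ and $b_k:=P_F^{-1}\tilde P_tv_k(t)$ and expand the inner product via the orbit tensors,
\[
(g_t\times g_F)\bigl((-a_k^\vee,b_k^*),(-a_l^\vee,b_l^*)\bigr)=Q(P_ta_k,a_l)+Q(P_Fb_k,b_l).
\]
Noting $P_ta_k=\tilde P_tv_k(t)=P_Fb_k$ and moving this factor into the first slot using symmetry, the right-hand side becomes $Q\bigl(\tilde P_tv_k(t),(P_t^{-1}+P_F^{-1})\tilde P_tv_l(t)\bigr)$, which collapses to $Q(\tilde P_tv_k(t),v_l(t))$ after invoking $(P_t^{-1}+P_F^{-1})\tilde P_t=\id$. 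Substituting $v_k(t)=\tilde P_t^{-1/2}v_k(0)$ and using symmetry of $\tilde P_t^{1/2}$ reduces the expression to $Q(v_k(0),v_l(0))=\delta_{kl}$, as required.

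No genuine obstacle is expected: the argument is short linear-algebraic bookkeeping. The only step that deserves care is ensuring that the identity $\tilde P_t^{-1}=P_t^{-1}+P_F^{-1}$ is applied to exactly the combination $(P_t^{-1}+P_F^{-1})\tilde P_t$ produced by the expansion, so that the two terms $Q(P_ta_k,a_l)$ and $Q(P_Fb_k,b_l)$ combine into the single symmetric form that then telescopes back to the original $Q$-orthonormal basis.
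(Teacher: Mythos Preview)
Your proposal is correct and essentially identical to the paper's own proof: horizontality is checked by matching the vectors to the template \eqref{eq:horbarpi} (the paper phrases this as noting the vectors are of the form \eqref{eq:lift}), and orthonormality is obtained by the same chain $Q(P_ta_k,a_l)+Q(P_Fb_k,b_l)=Q(\tilde P_tv_k(t),(P_t^{-1}+P_F^{-1})\tilde P_tv_l(t))=Q(\tilde P_tv_k(t),v_l(t))=\delta_{kl}$ using symmetry of $\tilde P_t$ and the definition of $v_k(t)$.
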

	\begin{proof}
	Note that the elements in \eqref{eq:123} are of the form \eqref{eq:lift}. Thus, it is sufficient to show that the set \eqref{eq:123} is orthonormal. A straightforward computation gives:
		\begin{multline*}\label{eq:ortonormalidade}
		(\textsl g_t+ \textsl g_F)((-{P_t^{-1}\tilde P_tv_i(t)}^{\vee},{P_F^{-1}\tilde P_tv_i(t)}^*),(-{P_t^{-1}\tilde P_tv_j(t)}^{\vee},{P_F^{-1}\tilde P_tv_j(t)}^*))\\ 
		= Q(\tilde P_tv_i(t),P_t^{-1}\tilde P_tv_j(t)) + Q(\tilde P_tv_i(t),P_F^{-1}\tilde P_tv_j(t))\\
		= Q(\tilde P_tv_i(t),(P_t^{-1} + P_F^{-1})\tilde P_tv_j(t))\\
		= Q(\tilde P_tv_i(t),v_j(t)) = Q(\tilde P_t^{1/2}v_i(0), \tilde P_t^{-1/2}v_j(0)) = \delta_{ij},
		\end{multline*}
		where we have used that $(P_t^{-1} + P_F^{-1}) = \tilde P_t^{-1}$ and that $\tilde P_t$ is symmetric.
	\end{proof}
	
	Let $\{e_i^{B}\}$ and $\{e_j^F\}$ be orthonormal bases  for the spaces normal to the orbits on $\cal P$ and on $F$, respectively. We complete the set on Lemma \ref{lem:horbasis} to a $\textsl g_t + \textsl g_F$-othornormal basis for the $\bar{\pi}$-horizontal space:
	\begin{equation}\mathcal{B}_t := \label{eq:htbasis}\left\{(e_i^B,0) ,(-{P_t^{-1}\tilde P_t^{1/2}v_k(0)}^{\vee},{P_F^{-1}\tilde P_t^{1/2}v_k(0)}^*), (0,e_j^F)\right\}.\end{equation}
	Denote by $e_1,\ldots,e_n$ the elements in $\cal B_t$.
%
	
	\begin{lemma}\label{prop:definitivo}For any $(p,f)\in\cal P\times F$ and $X+X_F+U^*\in T_{\bar{\pi}(p,f)}M$,
		\begin{equation}\label{eq:asymptotic}
		\lim_{t\to \infty}\Ricci_{\textsl h_t}(X + X_F + U^*) \ge \Ricci_{\textsl g}^{\mathbf{h}}(X) + \Ricci_{\textsl g_F}^{\mathbf{h}}(X_F) + \sum_k\frac{1}{4}\|[v_k(0),U]\|_Q^2.
		\end{equation}
	\end{lemma}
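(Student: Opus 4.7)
The plan is to compute $\Ricci_{h_t}(\tilde X)$ as the sum of the unreduced sectional curvatures $R_{h_t}(\tilde X,\tilde e,\tilde e,\tilde X)$ over the $h_t$-orthonormal basis of $T_{\bar\pi(p,f)}M$ obtained by projecting $\cal B_t$ from \eqref{eq:htbasis} through $d\bar\pi$. Using convention \eqref{eq:conventionU*} together with the identity $(P_t^{-1}+P_F^{-1})\tilde P_t^{1/2}=\tilde P_t^{-1/2}$, the three families in $\cal B_t$ project to $e_i^B$, $v_k(t)^*=(\tilde P_t^{-1/2}v_k(0))^*$, and $e_j^F$ respectively, so $\{e_i^B\}\cup\{v_k(t)^*\}\cup\{e_j^F\}$ is an $h_t$-orthonormal basis of $T_{\bar\pi(p,f)}M$. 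I split the Ricci sum into three groups accordingly.

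For each pair I use $R_{h_t}(\tilde X,\tilde e,\tilde e,\tilde X)=\tilde\kappa_t(\tilde C_t\tilde X,\tilde C_t\tilde e)$ and apply Theorem \ref{thm:secnew}; the resulting non-negative $\tilde z_t$-term is discarded and only the $\kappa_t$- and $K_{g_F}$-contributions are kept. Setting $U':=-P^{-1}\tilde P_t U$ and $V'_k:=-P^{-1}\tilde P_t^{1/2}v_k(0)$, one has $\tilde C_t\tilde X=X+X_F+U'^*$, $\tilde C_t e_i^B=e_i^B$, $\tilde C_t v_k(t)^*=V'^*_k$, and $\tilde C_t e_j^F=e_j^F$. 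Substituting into Theorem \ref{thm:secnew} (and simplifying $P_F^{-1}PV'_k=-P_F^{-1}\tilde P_t^{1/2}v_k(0)$), the three groups contribute respectively $\kappa_t(X+U'^\vee,e_i^B)$; $\kappa_t(X+U'^\vee,V'^\vee_k)+K_{g_F}(X_F-(P_F^{-1}PU')^*,(P_F^{-1}\tilde P_t^{1/2}v_k(0))^*)$; and $K_{g_F}(X_F-(P_F^{-1}PU')^*,e_j^F)$.

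The $t\to\infty$ analysis rests on $\tilde P_t^{-1}=P_F^{-1}+P^{-1}+t\cdot 1$ (since $P_t^{-1}=P^{-1}+t\cdot 1$), whence $\tilde P_t=\frac{1}{t}\cdot 1+O(t^{-2})$ on $\lie m_f$; in particular $U',V'_k\to 0$ and $\tilde P_t^{1/2}v_k(0)\sim t^{-1/2}v_k(0)$. Theorem \ref{thm:curvaturasec} bounds each $\kappa_t$ from below by $\kappa_0+\frac{t^3}{4}\|[PU_{\overline X},PV_{\overline Y}]\|_Q^2$, so the first group converges to $\sum_i R_g(X,e_i^B,e_i^B,X)=\Ricci_g^{\mathbf h}(X)$. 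In the second group, the Lie-bracket lower bound equals $\frac{t^3}{4}\|[\tilde P_tU,\tilde P_t^{1/2}v_k(0)]\|_Q^2$, which converges to $\frac{1}{4}\|[U,v_k(0)]\|_Q^2$ by the $1/t$ asymptotic, while the companion $\kappa_0$-piece and the $K_{g_F}$-cross piece both vanish since $V'_k\to 0$ and $\tilde P_t^{1/2}v_k(0)\to 0$. The third group converges to $\sum_j R_{g_F}(X_F,e_j^F,e_j^F,X_F)=\Ricci_{g_F}^{\mathbf h}(X_F)$. The main obstacle, and the heart of the argument, is precisely this asymptotic bookkeeping: the cubic $t^3$ factor from Theorem \ref{thm:curvaturasec} is exactly balanced by $\tilde P_t\sim 1/t$ to produce the finite, basis-indexed bracket contribution, and one must verify that the companion cross terms emerging from Theorem \ref{thm:secnew} are of strictly lower order and so vanish in the limit.
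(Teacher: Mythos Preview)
Your proof is correct and follows essentially the same strategy as the paper's: project $\cal B_t$ to an $h_t$-orthonormal basis of $T_{\bar\pi(p,f)}M$, apply Theorem \ref{thm:secnew} to each term $\tilde\kappa_t(\tilde C_t\tilde X,\tilde C_t\tilde e)$, discard $\tilde z_t$, and analyze the limits using $t\tilde P_t\to 1$. Your bookkeeping is in fact a bit cleaner than the paper's---writing $\tilde P_t^{-1}=P_F^{-1}+P^{-1}+t\cdot 1$ directly and introducing the shorthand $U'=-P^{-1}\tilde P_tU$, $V'_k=-P^{-1}\tilde P_t^{1/2}v_k(0)$ (which absorbs the identity $C_tP_t^{-1}=P^{-1}$ the paper invokes later)---but the argument is the same.
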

		\begin{proof}
		Using the basis $\cal B_t$, from \eqref{eq:htbasis}, and Theorem \ref{thm:secnew}, we have:
\begin{multline*}
\Ricci_{\textsl h_t}(\tilde X)=\sum_{i=1}^n\kappa_t(\tilde C_t\tilde X,\tilde C_t e_i)\geq \\\sum_i \kappa_t(X-(C_tP_t^{-1}\tilde P_tU)^{\vee},e_i^B) + \sum_{k}\kappa_t(X - (C_tP_t^{-1}\tilde P_tU)^{\vee},-C_tP_t^{-1}\tilde P_t^{1/2}v_k(0)^{\vee})\\ + \sum_{j}K_{\textsl g_F}(X_F + (P_F^{-1}\tilde P_tU)^*,e^F_j) + \sum_{k}K_{\textsl g_F}(X_F+(P_F^{-1}\tilde P_tU)^*,P_F^{-1}\tilde P_t^{1/2}v_k(0)^*).
			\end{multline*}
			On the other hand, the  $\tilde P_t$ satisfies:
						\begin{align}
							\lim_{t\to \infty}t\tilde P_t &= 1,\label{claim:vaicomtudo}\\
							\underset{t\to \infty}{\lim}P_t^{-1}\tilde P_t &= 1.\label{claim:vaicomtudo2}
						\end{align}
			In particular, $\tilde P_t \to 0$ as $t\to\infty$. Equation \eqref{claim:vaicomtudo} follows since $t\tilde P_t = P_F(P_t+P_F)^{-1}tP_t$, $P_t \to 0$ and $tP_t \to 1$. Equation \eqref{claim:vaicomtudo2} follows since 
\begin{equation*}
			\lim_{t\to\infty}P_t^{-1}\tilde P_t=\lim_{t\to\infty}(tP_t)^{-1}\lim_{t\to\infty}t\tilde P_t=1.
\end{equation*}						
			Using \eqref{claim:vaicomtudo}, we observe that
			\begin{align}
			\lim_{t\to \infty}\left\{\sum_{j}K_{\textsl g_F}(X_F + (P_F^{-1}\tilde P_tU)^*,e^F_j) + \sum_{k}K_{\textsl g_F}(X_F+(P_F^{-1}\tilde P_tU)^*,P_F^{-1}\tilde P_t^{1/2}v_k(0)^*)\right\} \nonumber\\=\Ricci_{\textsl g_F}^{\mathbf h}(X_F). \label{eq:1}
			\end{align}
			
			Moreover, using equation \eqref{eq:curvaturaseccional} and that $C_tP_t^{-1} = P^{-1}$, 
			\begin{equation}\label{eq:2}
			\lim_{t\to \infty}\sum_i \kappa_t(X-(C_tP_t^{-1}\tilde P_tU)^{\vee},e_i) \ge \lim_{t\to \infty}\Ricci_{\textsl g}^{\mathbf h}(X-(P^{-1}\tilde P_tU)^{\vee})=\Ricci_{\textsl g}^{\mathbf h}(X).
			\end{equation}
			For the remaining term:
			\begin{multline*}
			\kappa_t(C_tX - (C_tP_t^{-1}\tilde P_tU)^{\vee},-C_tP_t^{-1}\tilde P_t^{1/2}v_k(0)^{\vee}) \\\ge
			K_{\textsl g}(X-(P^{-1}\tilde P_tU)^{\vee},-P^{-1}\tilde P_t^{1/2}v_k(0)^{\vee}) +
			\frac{t^3}{4}\|[\tilde P_tU,\tilde P_t^{1/2}v_k(0)]\|^2_Q
			\end{multline*}
			Using \eqref{claim:vaicomtudo} and \eqref{claim:vaicomtudo2}, we obtain
			\begin{equation}\label{eq:3}
			\lim_{t\to \infty}\kappa_t(C_tX - (C_tP_t^{-1}\tilde P_tU)^{\vee},-C_tP_t^{-1}\tilde P_t^{1/2}v_k(0)^{\vee}) \ge \frac{1}{4}\|[v_k(0),U]\|_Q^2.
			\end{equation}
			Lemma \ref{prop:definitivo} follows by putting together \eqref{eq:1}, \eqref{eq:2} and \eqref{eq:3}.
		\end{proof}
		
	The following needed lemmas can be readily obtained via straightforward computations, therefore, we chose to not include their proofs.
		\begin{lemma}
		It holds that
		\begin{equation}
		\lim_{t\to\infty}\tilde z_t(\tilde C_t\tilde X,\tilde C_t\tilde Y) = 3\max_{Z\in \lie g\setminus\{0\}}\dfrac{\lim_{t\to \infty}\left\{dw^t_Z(X-U^{\vee},Y-V^{\vee}) + dw_Z(X_F,Y_F)\right\}^2}{\textsl g_F(Z^*,Z^*)}.
		\end{equation}
		Moreover, if the orbits on $\cal P$ are totally geodesic then 
		\begin{equation}
		 \lim_{t\to\infty}dw^t_Z(X-U^{\vee},Y)^2 = \textsl g(A_XU,Z)^2.
		\end{equation}
		\end{lemma}
		
		\begin{lemma}\label{lem:limitecompleto}
		If $\{e_i\}$ denotes the set of elements in the basis $\cal B_t$ (see equation \eqref{eq:htbasis}) and the $G$ orbits on $\cal P$ are totally geodesic then for any $\widetilde X = X + X_F + U^*$ it holds that
		\begin{equation}
		 \lim_{t\to\infty}\sum_i\tilde z_t(\tilde C_t d\overline\pi e_i, \tilde C_t\tilde X) = 3\sum_{i=1}|A^{\pi}_Xe^B_i|^2_{\textsl g} + 3\sum_{j=1}|A^{\pi_F}_{X_F}e_j^F|_{g_F}^2.
		\end{equation}
		
		Therefore,
		\begin{equation}
		 	\lim_{t\to \infty}\Ricci_{\textsl h_t}(X + X_F + U^*) = \Ricci_{\bar{\textsl g}}(d\pi X) + \Ricci^{\mathbf{h}}(X_F) + 3\sum_{j=1}|A^{\pi_F}_{X_F}e_j^F|_{g_F}^2 + \sum_k\frac{1}{4}\|[v_k(0),U]\|_Q^2.
		\end{equation}
		\end{lemma}
		
We now recover the results of L.J. Schwachh{\"o}fer and W. Tuschmann (\cite{schwachhofer2004metrics}) relating the geometry and the topology of bi-quotients. The method applied provides a huge simplification over their work. We re-inforce, however, that the possibility of a simplification was already expected, as observed B. Wilking and W. Ziller, see \cite{mutterz}. More precisely, it is always possible by the means of a Cheeger deformation to prove the existence of metrics of almost non-negative sectional curvature on cohomogeneity one manifolds: the idea consists of putting metrics on non-negative sectional curvature near the singular orbits (recall for instance that these are homogeneous disk-bundles (\cite{gz})) and extend these arbitrarily in the middle. Then it can be shown that all curvatures in the middle go to $0$.

In what follows the method is very different, but it relies in the same spirit: Cheeger deformations tend to shrink the curvature along the orbits. More importantly, they also work as a regularization process: the metrics naturally converge to a metric with totally geodesic fibers. This phenomenon is manifested here in the form that the Ricci curvature of a bi-quotient is completely determined by the Ricci curvature of the upstairs Lie group with a bi-invariant metric.

\begin{theorem}[Schwachh{\"o}fer--Tuschmann]\label{thm:biq}
A bi-quotient $G//K$ of a compact connected Lie group $G$ carries a metric of positive Ricci curvature if, and only if, its fundamental group is finite.
\end{theorem}
\begin{proof}
Recall that according to section \ref{sec:unifiedbi}, $G//K$ can be seen as the total space with trivial fiber $F = \{e\}$ and structure group $K$. Since the tangent space to the `manifold point' $\{e\}$ is only the zero vector it follows that $\Ricci_{\textsl g_F}^{\mathbf h} \equiv 0$. Moreover, denote by $\pi : K\hookrightarrow G\rightarrow G//K$ the Riemannian submersion obtained from the principal bundle with total space $\cal P = G$. According to Lemma \ref{lem:limitecompleto}, if $\overline \ga$ denotes the submersion metric induced by $\pi$ from $\textsl g$ one gets
\begin{align*}
\lim_{t\to \infty}\Ricci_{\textsl h_t}(X + X_F + U^*) &= \Ricci_{\bar{\textsl g}}(d\pi X) + 3\sum_{j=1}|A^{\pi_F}_{X_F}e_j^F|_{g_F}^2 + \frac{1}{4}\sum_k\|[v_k(0),U]\|_Q^2\\
&=\Ricci_{\bar{\textsl g}}(d\pi X) + \frac{1}{4}\sum_k\|[v_k(0),U]\|_Q^2,
\end{align*}
where the last equality comes from the fact that the horizontal space associated to the action of $K$ in $F = \{e\}$ is only the zero vector, i.e, $X_F = 0$. Now the proof is finished by noticing that if $|\pi_1(G//K)| < \infty$ then the same holds for $G$. 
Therefore, since
\begin{align*}
\Ricci_{\bar{\textsl g}}(d\pi X) &= \Ricci^{\mathbf h}(X) + 3\sum_{i=1}^{\dim G//K}|A^{\pi}_Xe_i|^2,
\end{align*}
where $\{e_1,\ldots,e_{\dim G//K}\}$ is an orthonormal basis to the horizontal space of the $K$-action on $G$, associated to the submersion $\pi$, if $Q$ is any bi-invariant metric on $G$ it holds that
\begin{align*}
\Ricci_{\bar{\textsl g}}(d\pi X) &= \Ricci^{\mathbf h}(X) + 3\sum_{i=1}^{\dim G//K}|A^{\pi}_Xe_i|^2,\\
&= \sum_{i=1}^{\dim G//K}\|[X,e_i]\|^2_Q + \tfrac{3}{4}\sum_{i=1}^{\dim G//K}\|[X,e_i]^{\lie k}\|_Q^2,
\end{align*}
where $\lie k$ is the Lie algebra of $K$.
Hence,
\begin{equation*}
\lim_{t\to \infty}\Ricci_{\textsl h_t}(X + X_F + U^*) = \sum_{i=1}^{\dim G//K}\|[X,e_i]^{\lie m}\|^2_Q + \tfrac{7}{4}\sum_{i=1}^{\dim G//K}\|[X,e_i]^{\lie k}\|_Q^2 + \frac{1}{4}\sum_k\|[v_k(0),U]\|_Q^2
\end{equation*}
and so $\lim_{t\to \infty}\Ricci_{\textsl h_t}(X + X_F + U^*) > 0$ since $G$ has finite fundamental the sums of the three kind of brackets cannot vanish simultaneously. 
\end{proof}

We finish providing a result about almost non-negative sectional curvature and positive Ricci curvature, simultaneously, to biquotients. This shall be done taking advantage of the following similar result to \cite[Theorem 0.18]{fukaya-yamaguchi}, which has a very simple proof. Once more, this proof was already known to be possible by B. Wilking and W. Ziller in the context of principal fiber bundles. Here we extent it naturally to general fiber bundles with compact structure group.

\begin{theorem}[Fukaya--Yamaguchi type result]\label{thm:main}
Let $F \hookrightarrow M \to B$ be a bundle with compact structure group $G$, fiber $F$ and base $B$. Assume that $M$ is an associate bundle to $\pi : (\cal P,\textsl g) \to B$ such that:
\begin{enumerate}
\item $K_{\textsl g}\geq 0$;
\item $F$ has a $G$-invariant metric $\textsl g_F$ of non-negative sectional curvature.
\end{enumerate}
Then $M$ admits a sequence of Riemannian metrics
$\{\textsl g_n\}$ such that $\mathrm{sec}_{\textsl g_n} \geq -\frac{1}{n},$ $\mathrm{diam}~(M,\textsl g_n) \leq \frac{1}{n}.$
\end{theorem}

\begin{definition}\label{def:almostnonnegativesec}
A compact manifold $M$ with a family of metrics $(\textsl g_n)$ as on the thesis of Theorem \ref{thm:main} is said to admit \textit{almost non-negative sectional curvature}.
\end{definition}

Theorem \ref{thm:main} is a straightforward consequence of the following lemma.
\begin{lemma}
Let $F\hookrightarrow M \to B$ and $\pi : \cal P \to B$ as on the hypotheses of Theorem \ref{thm:main}. Then for each $\epsilon > 0$ there exists $t_{\epsilon} > 0$ such that for every $t > t_{\epsilon},$ $\tilde \kappa_t(\widetilde X,\widetilde Y) \geq -\epsilon,~|\widetilde X| = |\widetilde Y| = 1.$
\end{lemma}
\begin{proof}
Assume by contradiction that there is $\epsilon > 0$, a sequence $\{t_n\}\nearrow +\infty$ and a sequence of planes $\{\widetilde X_n,\widetilde Y_n\}$ with $|\widetilde X_n| = |\widetilde Y_n| = 1$ satisfying
\begin{equation}\label{eq:contradiction}
\tilde \kappa_n(\widetilde X_n,\widetilde Y_n) \leq -\epsilon.
\end{equation}
By compactness, passing to a subsequence if necessary one extracts a limit plane $\{\widetilde X,\widetilde Y\}$ such that
\begin{equation*}
-\epsilon \geq \lim_{n\to\infty}\left\{\kappa_n(X_n{+}U_n^{\vee},Y_n{+}V_n^{\vee}) {+} K_{\textsl g_F}((X_F)_n - (P_F^{-1}PU_n)^*, (Y_F)_n - (P_F^{-1}PV_n)^*)\right\}.
\end{equation*}

Theorem \ref{thm:curvaturasec} then implies that

\begin{equation}
-\epsilon \geq \lim_{n\to\infty}\kappa_n(X_n{+}U_n^{\vee},Y_n{+}V_n^{\vee}) \geq\lim_{n\to\infty}\left\{\kappa_0(X_n+U_n^{\vee},Y_n + V_n^{\vee} + n^3|[U_n,V_n]|_Q^2\right\}.
\end{equation}
and hence, $\lim_{n\to\infty}[U_n,V_n] = 0$. Therefore,
\begin{equation}
-\epsilon \geq \kappa_0(X+U^{\vee},Y{+}V^{\vee}) {+} K_{\textsl g_F}(X_F-(P_F^{-1}PU)^*,Y_F - (P_F^{-1}PV)^*) \geq 0.
\end{equation}\qedhere
\end{proof}
We thus conclude:
\begin{theorem}[Schwachh{\"o}fer--Tuschmann]\label{thm:biqn}
Any bi-quotient $G//K$ from a compact Lie group $G$ admits a metric with positive Ricci curvature and almost non-negative sectional curvature simultaneously if, and only if, $G//K$ has finite fundamental group.
\end{theorem}

\begin{remark}\label{rem:fat}
Since $\cal P$ is a principal bundle, we could try to impose more rigid hypotheses to produce a metric of non-negative sectional curvature, but this is not possible only via the above method. 
Indeed, even assuming that $G = S^3, SO(3)$ and that $\textsl g_F$ and $\textsl g$ have positive sectional curvature we would have that \[\kappa_0(X,Y{+}V^{\vee}) {+} K_{\textsl g_F}(X_F,Y_F - (P_F^{-1}PV)^*) = 0\] for planes $X = 0,U = 0, V = 0, Y_F = 0.$ In this case,
\[\widetilde X = X_F,~\widetilde Y = Y,\]
so we get no contradiction for any $\epsilon > 0$. 
\end{remark}
A compact manifold with a family of metrics as in the Definition \ref{def:almostnonnegativesec} with $\mathrm{sec}$ changed to $\Ricci$ is a manifold with \textit{almost non-negative Ricci curvature}. As a last result in this section we prove: 

\begin{theorem}\label{thm:previagromov}
Let $F\hookrightarrow M \rightarrow B$ be a fiber bundle with compact structure group $G$ and total space $M$. Also assume that $F$ carries a metric $\textsl g_F$ of non-negative Ricci curvature and $B$ carries a metric $\textsl g_{\epsilon}$ with $\Ricci(\textsl g_{\epsilon}) \geq -\epsilon^2$. Then $M$ carries a metric $\textsl h_{\epsilon}$ with $\Ricci(\textsl h_{\epsilon}) \geq -\epsilon^2$.
\end{theorem}
\begin{proof}
Let $\textsl h_{\epsilon}$ be the connection Riemannian metric on $M$. That is, it has totally geodesic fibers and make $(M,\textsl h) \rightarrow (B,\textsl g_{\epsilon})$ to be a Riemannian submersion. Considering the $\textsl h^{\epsilon}_t$ deformation given by Definition \ref{defn} we see that for large $t$ it holds that, according to Lemma \ref{lem:limitecompleto}, the limit behavior of $\Ricci(h^{\epsilon}_t)$ is 
\begin{align}
\Ricci_{\bar{\textsl g}}(d\pi X) + \Ricci^{\mathbf{h}}(X_F) + 3\sum_{j=1}|A^{\pi_F}_{X_F}e_j^F|_{g_F}^2 + \sum_k\frac{1}{4}\|[v_k(0),U]\|_Q^2 \geq -\epsilon^2.
\end{align}\qedhere
\end{proof}

\section{Some comments on the Petersen--Wilhelm fiber dimension conjecture}
\label{sec:petwill}

Related to the Petersen--Wilhelm conjecture, assume that $S^3, SO(3) \hookrightarrow \cal P \rightarrow B$ is a principal bundle with positive sectional curvature. It is then conjectured that $\dim B > 3$. We conjecture further:
\begin{conjecture}[Principal bundle Strong Petersen--Wilhelm conjecture]\label{conj:principal}
Any $S^3, SO(3)$ principal bundle over a positively curved manifold admits a metric with positive sectional curvature if, and only if, such a submersion is fat.
\end{conjecture}

Assume for instance the validity of Conjecture \ref{conj:principal} and take a fat principal bundle $S^3, SO(3) \hookrightarrow \cal P \rightarrow B$. Regard it with a metric of positive sectional curvature. Now let $(F,\ga_F)$ be a Riemannian manifold with a $S^3, SO(3)$ isometric action. Assume that $\ga_F$ has positive sectional curvature. Then Theorem \ref{thm:secnew} implies that the $\aga_1$ metric deformation (Definition \ref{defn}) has non-negative sectional curvature. More importantly, Remark \ref{rem:fat} implies that the existence of flat planes is \emph{intrinsic} in the sense it only depends on the $G = S^3, SO(3)$ actions on both $\cal P$ and $F$.

Now recall that if $F = S^2$ and $\cal P \rightarrow B$ is the $SO(3)$ principal bundle associated to it, then $\overline \pi: S^2 \hookrightarrow M \rightarrow B$ is fat if, and only if, $\pi : SO(3)\hookrightarrow \cal P \rightarrow B$ is fat (\cite[Proposition 2.22, p.16]{Ziller_fatnessrevisited}). This implies that $\dim B \geq 4$ and hence, Petersen--Wilhelm conjecture is verified in this case. More drastically, the existence of a metric of non-negative sectional curvature on $M$ already verifies the conjecture.

That all said and also taking in account the results in \cite{speranca2017on}, we are tempted to conjecture the following:
\begin{conjecture}
Let $\overline \pi : F\hookrightarrow M \rightarrow B$ a fiber bundle with structure group $S^3$ or $SO(3)$ over a positively curved manifold $B$. If the principal bundle $\pi : S^3, SO(3) \hookrightarrow \cal P \rightarrow B$ associated to it is fat and $\overline \pi$ has a metric of positive vertizontal curvature then $\overline \pi$ is fat.
\end{conjecture}
Let us verify that this is precisely the case to $S^2$-fat bundles, therefore agreeing with our conjecture. More precisely, let us show that $\aga_1$ has positive vertizontal curvature.

Observe that the vertical space associated to $\overline \pi$ consists of vectors tangent to $F = S^2$. Since the $SO(3)$ action on $S^2$ is transitive and $S^2$ can be identified with the homogeneous space $SO(3)/SO(2)$ then for the fixed origin $o\in F$ we have $T_0F \cong \lie so(3)\ominus\lie so(2)$, meaning that $T_oF$ is isomorphic to the complement of $\lie so(2)$ in $\lie so(3)$. Fixing an \text{Ad}-invariant inner product in $\lie so(3)$ such a complement can be chosen to be orthogonal.

Finally, the horizontal space on $\cal P$ at any $p\in \cal P$ is isomorphic to the tangent space $T_{\pi(p)}B$ and it is also isomorphic to the horizontal space orthogonal to $T_oF$ with respect to $\aga_1$. That is,
\begin{equation*}
T_{[(p,o)]}M \cong T_oF\oplus \cal H^{\pi} \cong (\lie so(3)\ominus \lie so(2))\oplus (\lie so(3))^{\perp_{\ga}}.
\end{equation*}
Therefore, any vertizontal plane tangent to $M$ is of the form $U^*\wedge X$ for $U\in \lie so(3)\ominus \lie so(2)$ and $X\in (\lie so(3))^{\perp_{\ga}}$. Therefore Theorem \ref{thm:secnew} implies that
\[\widetilde K_1(X,U^*) \geq K_{\ga_1}(X,U^{\vee}) > 0 \Leftrightarrow U^{\vee} \neq 0.\]
Since the $SO(3)$ action on $\cal P$ is free we have concluded the result.
$\square$

\section*{Acknowledgments}
Part of this work comes from the Ph.D. thesis of the first named author, L. F. Cavenaghi, supported both by S\~ao Paulo Research Foundation FAPESP grant 2017/24680-1 and by CAPES. Also, some ideas of this work were conceived during his former postdoc position at the University of Fribourg, supported in part by the SNSF-Project 200020E\_193062 and the DFG-Priority programme SPP 2026. Currently, L. F. Cavenaghi is supported by CAPES, a grant obtained for the reason of having won the CAPES thesis prize in 2021.
L. Grama is partially supported by S\~ao Paulo Research Foundation FAPESP grants 2018/13481-0, 2021/04003-0, 2021/04065-6 and CNPq grant no. 305036/2019-0.
	
	\bibliographystyle{alpha}
	
	\bibliography{main}

	\end{document}